\documentclass[12pt]{article}

\usepackage[colorlinks, linkcolor=blue, citecolor=blue]{hyperref}           
\usepackage{color}
\usepackage{graphicx,subfigure,amsmath,amssymb,amsfonts,bm,epsfig,epsf,url,dsfont}
\usepackage{booktabs}
\usepackage{amsthm,mathrsfs}

\newtheorem{thm}{Theorem}[section]
\newtheorem{lem}[thm]{Lemma}
\newtheorem{assum}[thm]{Assumption}
\newtheorem{proposition}[thm]{Proposition}

\usepackage{tikz}
\usepackage{bbm}
\usepackage[small,bf]{caption}
\usepackage[top=1in,bottom=1in,left=1in,right=1in]{geometry}
\usepackage{fancybox}
\usepackage{algorithm}
\usepackage{verbatim}
\usepackage[titletoc,toc]{appendix}

\usepackage{algpseudocode}

\usepackage{mathtools}

\usepackage{scalerel,stackengine}
\stackMath
\newcommand\reallywidehat[1]{%
\savestack{\tmpbox}{\stretchto{%
  \scaleto{%
    \scalerel*[\widthof{\ensuremath{#1}}]{\kern-.6pt\bigwedge\kern-.6pt}%
    {\rule[-\textheight/2]{1ex}{\textheight}}
  }{\textheight}%
}{0.5ex}}%
\stackon[1pt]{#1}{\tmpbox}%
}

\makeatletter
\newcommand*{\rom}[1]{\expandafter\@slowromancap\romannumeral #1@}
\makeatother

\DeclareMathOperator*{\argmin}{arg\,min}

\usepackage{xspace}

\numberwithin{equation}{section}

\allowdisplaybreaks

\begin{document}
\title{Provable method of moments for cryo-EM}

\makeatletter
\renewcommand{\thefootnote}{\arabic{footnote}}
\renewcommand{\@fnsymbol}[1]{\number#1}
\makeatother

\author{
Amnon Balanov\thanks{\scriptsize{Department of Electrical and Computer Engineering, Technion -- Israel Institute of Technology, Haifa, Israel.}},
Tamir Bendory\footnotemark[1],
Dan Edidin\thanks{\scriptsize{Department of Mathematics, University of Missouri, Columbia, MO, USA.}},
Josh Katz\thanks{\scriptsize{The MITRE Corporation, Bedford, MA, USA.}},\\
Shay Kreymer\thanks{\scriptsize{School of Electrical and Computer Engineering, Tel Aviv University, Tel Aviv Israel}},
Nir Sharon\thanks{\scriptsize{School of Mathematical Sciences, Tel Aviv University, Tel Aviv, Israel.}}
}

\maketitle

\begin{abstract}
We study the recovery of a three-dimensional volume in single-particle cryo-electron microscopy (cryo-EM) from noisy tomographic projections acquired at unknown viewing rotations, drawn independently and uniformly from $\mathrm{SO}(3)$. We show that, in a spherical-shell representation satisfying an explicit shell-separation condition, the first three moments generically determine the underlying real volume up to its intrinsic global $\mathrm{O}(3)$ ambiguity. The proof is constructive and yields a frequency-marching procedure that successively recovers the volume.
Consequently, consistent recovery in the high-noise regime has sample complexity $n=\omega(\sigma^6)$, where $n$ is the number of observations and $\sigma^2$ is the noise variance. Numerical experiments validate the constructive recovery scheme under finite sampling and additive noise, and indicate that direct third-moment optimization can succeed beyond the structural assumptions required by the constructive proof.
\end{abstract}

\renewcommand{\thefootnote}{$\ast$}
\footnotetext{\scriptsize{Affiliation with the MITRE Corporation is for identification purposes only and is not intended to convey or imply MITRE’s concurrence with, or support for, the positions, opinions, or viewpoints expressed by the
author. \copyright 2026 The MITRE Corporation. ALL RIGHTS RESERVED.}}
\section{Introduction}

Single-particle cryo-electron microscopy (cryo-EM) is one of the leading techniques for determining the three-dimensional structure of macromolecules from noisy two-dimensional projection images; see, for example,~\cite{kuhlbrandt2014resolution, bai2015cryo, bendory2020single,singer2020computational}. In the mathematical model considered here, the unknown volume $f:\mathbb{R}^3\to\mathbb{R}$ is observed through
\begin{align}
    y_i = P(R_i\cdot f) + \varepsilon_i,
    \qquad i=1,\dots,n,
    \label{eq:cryoEM}
\end{align}
where $P$ denotes the tomographic projection operator, $R_i\stackrel{\mathrm{i.i.d.}}{\sim}\operatorname{Haar}(\mathrm{SO}(3))$ are unknown viewing rotations, and $\varepsilon_i$ are independent Gaussian noise vectors with variance $\sigma^2$. We assume that $f$ belongs to a finite-dimensional space $V$ of band-limited functions on $\mathbb{R}^3$; the precise shell-model representation is introduced in Section~\ref{subsec:spherical-harmonics}. Because the observation model is invariant under global rotations and handedness reversal, $f$ can be identified only up to its global $\mathrm{O}(3)$-orbit; see Lemma~\ref{lem:reflection-invariance}.

A classical route to cryo-EM reconstruction goes back to Kam~\cite{kam1980reconstruction}, who observed that when viewing directions are uniformly distributed, low-order moments of the projection images encode rotationally averaged information about the underlying three-dimensional volume. From a modern perspective, this is a method-of-moments formulation of cryo-EM~\cite{bhamre2015orthogonal, sharon2020method,bendory2020single}.

From a mathematical perspective, the cryo-EM problem belongs to the class of projected orbit-recovery problems, in which an unknown signal is acted on by a latent element of a group and is then observed through a linear operator~\cite{bandeira2023estimation,balanov2026projected}. A central principle in this framework is that, in the high-noise regime, the sample-complexity exponent is governed by the lowest-order moment that identifies the signal orbit~\cite{perry2019sample,bandeira2023estimation,fan2024maximum,edidin2025orbit}. More precisely, if moments up to order $d-1$ fail to identify the orbit while the $d$th moment does, then the characteristic sample-complexity scale is $\sigma^{2d}$ as $\sigma\to\infty$, and consistent recovery requires $n/\sigma^{2d}\to\infty$. Since our main focus is precisely this high-noise regime, the central question is therefore whether the first three moments of the projected cryo-EM observations contain enough information to identify the underlying volume up to its global $\mathrm{O}(3)$ ambiguity, and whether this identification can be made constructively.

\paragraph{Related work and frequency marching.}
A recurring constructive approach in moment-based orbit recovery is \emph{frequency marching}: one first recovers a low-frequency or low-degree component, fixes the corresponding global group ambiguity, and then uses higher-order invariants that couple already recovered components with the next unknown component to proceed successively through frequency or harmonic degree. Variants of this principle appear in several orbit-recovery problems. In classical and dihedral multi-reference alignment, low-order moments can be used to recover the signal progressively across Fourier frequencies~\cite{perry2019sample,bandeira2023estimation,edidin2026reflection}. More recently, a constructive frequency-marching procedure was developed for the unprojected $\mathrm{SO}(3)$ problem, proving that degree-three invariants globally determine a generic band-limited spherical signal up to its orbit~\cite{bendory2025orbit,bendory2026provable}. A related strategy was subsequently adapted to projected MRA, where the first three moments still generically determine the appropriate dihedral orbit despite the additional linear projection~\cite{balanov2026projected}.

For cryo-EM, a complementary line of work has addressed identifiability without relying on a frequency-marching construction. A generic Jacobian-rank analysis of the moment map showed that moments up to order three provide local generic identifiability in both the unprojected $\mathrm{SO}(3)$ model and the projected cryo-EM model~\cite{fan2024maximum}. This establishes that third-order moments generically capture the \emph{local degrees} of freedom of the orbit, but does not show that they \emph{globally} separate generic volumes or provide a constructive recovery procedure.

\paragraph{Main results.}
The preceding results leave open the corresponding global recovery question for the moments of single-particle cryo-EM. Under uniformly distributed viewing directions, the second moment determines 
the Gram matrices of each harmonic block and therefore leaves an independent orthogonal ambiguity for each block~\cite{kam1980reconstruction,bhamre2015orthogonal,bendory2024sample}. Although third-order invariants resolve the analogous ambiguities in the \emph{unprojected} $\mathrm{SO}(3)$ problem~\cite{bendory2025orbit}, after tomographic projection only certain combinations of these invariants remain observable (see Figure~\ref{fig:1}(a) and Section~\ref{subsec:projection_mixing} for further details). It is therefore not a priori clear whether the third projected moment retains enough information to resolve the blockwise ambiguities and identify the volume up to its intrinsic global $\mathrm{O}(3)$ symmetry.

In this paper, we show that, for uniformly distributed viewing directions and under an explicit finite-dimensional spherical-shell condition, the first three moments generically determine the underlying real volume up to its global $\mathrm{O}(3)$ orbit. The proof is constructive and yields the \emph{bidirectional frequency-marching} procedure illustrated in Figure~\ref{fig:1}(c)-(d): a forward march recovers isolating outer-shell coefficients in increasing harmonic degree, followed by a reverse backfill of the remaining inner-shell coefficients. Since the second projected moment does not generically identify the volume whereas the third does under our assumptions, the corresponding high-noise sample-complexity scale is $\sigma^6$, and consistent recovery requires $n=\omega(\sigma^6)$. We complement the theory with numerical experiments that validate the constructive recovery scheme from finite-sample moment estimates and investigate direct third-moment optimization beyond the structural assumptions required by the proof.

\begin{figure*}[t]
    \centering
    \includegraphics[width=\textwidth]{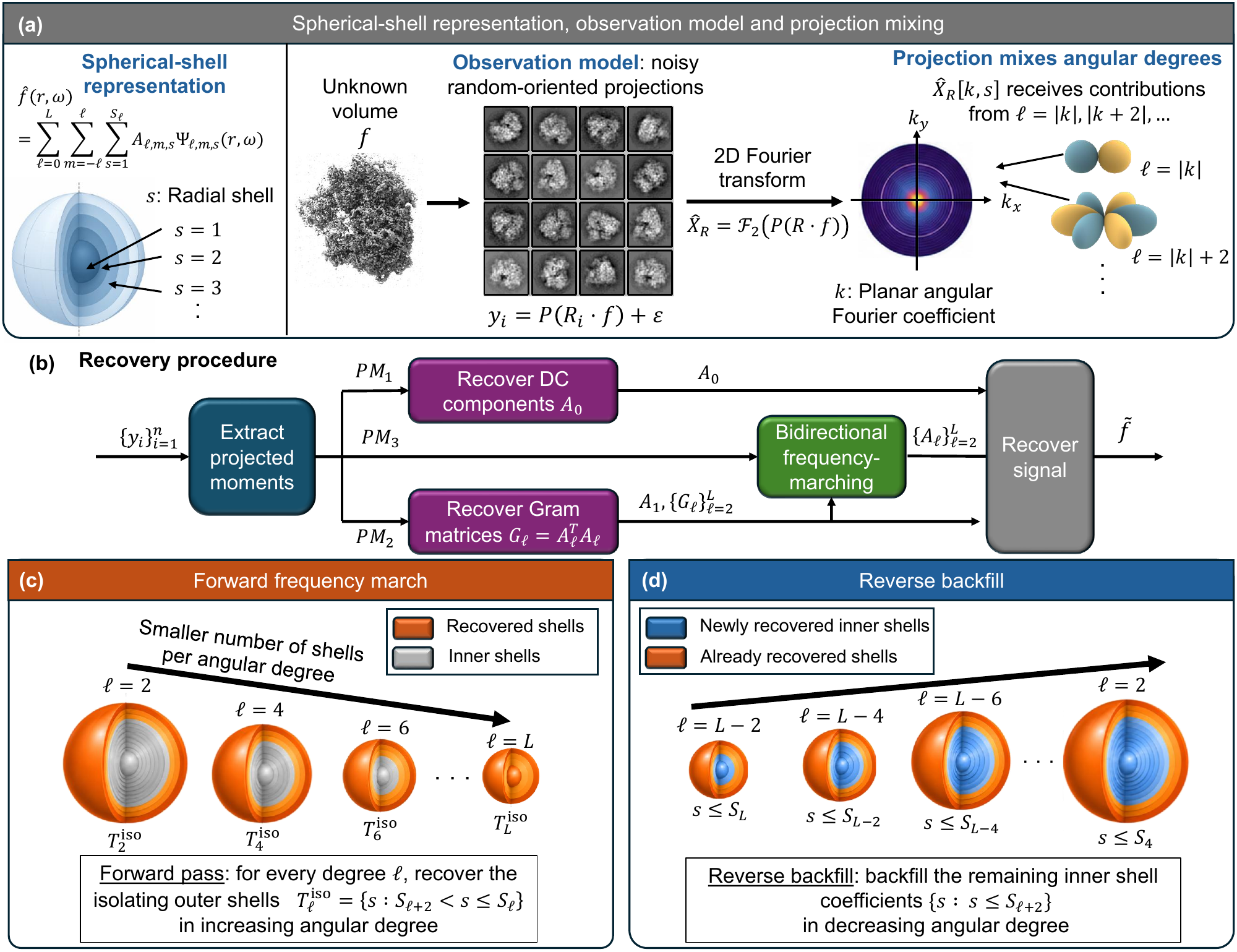}
    \caption{\textbf{Overview of the model and constructive recovery.} 
    (a) The volume is represented on spherical Fourier shells, while tomographic projection mixes harmonic invariants of matching parity in the observable planar Fourier coefficients. 
    (b) The first three projected moments provide the quantities used for reconstruction. 
    (c) The forward frequency march recovers the isolating outer-shell coefficients in increasing harmonic degree. 
    (d) The reverse backfill recovers the remaining inner-shell coefficients in decreasing harmonic degree.}
    \label{fig:1}
\end{figure*}

\section{Preliminaries}
\label{sec:prelim}

\paragraph{Notation.}
We write $\mathbb{R}^n$ and $\mathbb{C}$ for the $n$-dimensional real Euclidean space and the complex numbers, respectively. The orthogonal and rotation groups of $n\times n$ real matrices are denoted by $\mathrm{O}(n)$ and $\mathrm{SO}(n)$, respectively, and $\mathrm{Haar}(\mathrm{SO}(n))$ denotes the normalized Haar measure on $\mathrm{SO}(n)$. For $x,y\in\mathbb{R}^n$, we write $\langle x,y\rangle$ for the standard Euclidean inner product. Throughout, $\mathcal{F}_n$ denotes the Fourier transform on $\mathbb{R}^n$; explicitly, for $f\in L^1(\mathbb{R}^n)$,
\begin{align}
    \widehat{f}(\omega) = (\mathcal{F}_n f)(\omega) := \int_{\mathbb{R}^n} f(x)e^{-i\langle \omega,x\rangle}\,dx,
    \qquad \omega\in\mathbb{R}^n.
\end{align}
Expectations are understood whenever the relevant integrability conditions hold.

\paragraph{Noiseless projection moments.}
For the identifiability analysis, we work with the moments of the noiseless projection $P(R\cdot f)$, with $R\sim\mathrm{Haar}(\mathrm{SO}(3))$, rather than directly with the noisy observations~\eqref{eq:cryoEM}. At the population level, corresponding to $n\to\infty$, these signal-only moments can be isolated from the moments of the noisy observations by subtracting the explicit lower-order contributions of the additive Gaussian noise; see, e.g.,~\cite[Appendix~C.1]{balanov2026generalized}. We therefore formulate the theoretical results directly in terms of the signal-only projected moments and return to their finite-sample estimation from noisy observations in Section~\ref{sec:constructive-recovery}. Here $(R\cdot f)(x):=f(R^{-1}x)$, and, writing $x=(u,z)\in\mathbb{R}^2\times\mathbb{R}$, the tomographic projection is
\begin{align}
    P(f)(u):=\int_{\mathbb{R}}f(u,z)\,dz,
    \qquad u\in\mathbb{R}^2.
\end{align}

\paragraph{Handedness invariance.}
As is standard in single-particle cryo-EM, the volume cannot in general be distinguished from its mirror image, giving rise to the usual \emph{handedness} (or chirality) ambiguity~\cite{singer2011three,bendory2020single}. Thus, although the viewing rotations belong to $\mathrm{SO}(3)$, the distribution of the noiseless projections is invariant under the full orthogonal group $\mathrm{O}(3)$, and the volume is identifiable at best up to its global $\mathrm{O}(3)$ orbit.

\begin{lem}[Handedness invariance]
\label{lem:reflection-invariance}
For every real volume $f\in V$ and every $g\in\mathrm{O}(3)$,
\begin{align}
    P(R\cdot(g\cdot f)) \stackrel{d}{=} P(R\cdot f),
    \qquad
    R\sim\mathrm{Haar}(\mathrm{SO}(3)).
\end{align}
\end{lem}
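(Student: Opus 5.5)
The plan is to treat $g\in\mathrm{SO}(3)$ and $g\notin\mathrm{SO}(3)$ separately, and to reduce the second case to the first by means of one explicit reflection. The single fact doing the work is that $P$ integrates along the $z$-axis, so a reflection in a plane containing that axis passes through $P$ as a reflection of the image plane.

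\emph{Case $g\in\mathrm{SO}(3)$.} Here $R\cdot(g\cdot f)=(Rg)\cdot f$, by the action property $(R\cdot(g\cdot f))(x)=f(g^{-1}R^{-1}x)$. Haar measure on $\mathrm{SO}(3)$ is right-invariant, so $Rg\stackrel{d}{=}R$. Applying the measurable map $h\mapsto P(h\cdot f)$ to both sides gives $P(R\cdot(g\cdot f))\stackrel{d}{=}P(R\cdot f)$. (Band-limitedness of $f$ guarantees that $P$ is well defined.)

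\emph{Case $\det g=-1$.} Write $g=Jg'$ with $g'\in\mathrm{SO}(3)$ and $J=\mathrm{diag}(1,-1,1)$. This is a reflection across a plane containing the projection axis $e_3$. By the first case, it suffices to show
\begin{align}
P(R\cdot(J\cdot f))\stackrel{d}{=}P(R\cdot f).
\end{align}
Note that $J$ normalizes $\mathrm{SO}(3)$, so $R\cdot(J\cdot f)=J\cdot\big((JRJ)\cdot f\big)$, using $J^{-1}=J$. The conjugation $R\mapsto JRJ$ is a continuous automorphism of $\mathrm{SO}(3)$, and it pushes Haar measure forward to Haar measure by uniqueness of Haar measure. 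Hence $R':=JRJ\stackrel{d}{=}R$.

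It remains to check that $P(J\cdot h)=\tilde J\cdot P(h)$, where $\tilde J=\mathrm{diag}(1,-1)$ acts on $\mathbb{R}^2$. Since $J$ fixes the $z$-coordinate, this follows directly from the definition:
\begin{align}
P(J\cdot h)(u)=\int h(\tilde J u,z)\,dz=P(h)(\tilde J u).
\end{align}
Therefore $P(R\cdot(J\cdot f))=\tilde J\cdot P(R'\cdot f)$ with $R'\stackrel{d}{=}R$.

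\emph{Main obstacle.} We must still remove the in-plane reflection $\tilde J$ acting on the image, and this is the only delicate point. The plan is to use the rotation $Q=\mathrm{diag}(1,-1,-1)\in\mathrm{SO}(3)$ (rotation by $\pi$ about $e_1$). It is a genuine rotation that acts on the image coordinates like $\tilde J$ and merely flips $z$. Projection is insensitive to flipping $z$, because $\int h(u,-z)\,dz=\int h(u,z)\,dz$, so
\begin{align}
P(Q\cdot h)=\tilde J\cdot P(h).
\end{align}
Consequently $\tilde J\cdot P(R'\cdot f)=P((QR')\cdot f)$. By left-invariance of Haar measure, $QR'\stackrel{d}{=}R$, and this gives the claim.

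Combining the two cases covers all of $\mathrm{O}(3)$. The argument never uses that $f$ is real beyond $P$ being well defined.
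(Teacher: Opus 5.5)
The paper states this lemma without proof, presenting it as the standard handedness ambiguity, so there is no argument in the paper to compare yours against. Your proposal is correct and supplies a valid proof. Each step checks out:
\begin{itemize}
\item $R\cdot(g\cdot f)=(Rg)\cdot f$, and bi-invariance of Haar measure settles the case $g\in\mathrm{SO}(3)$.
\item Conjugation by $J$ preserves Haar measure. This is even more direct than uniqueness: $JRJ=(-J)R(-J)$ with $-J\in\mathrm{SO}(3)$.
\item Both intertwining identities, $P(J\cdot h)=\tilde J\cdot P(h)$ and $P(Q\cdot h)=\tilde J\cdot P(h)$, are right.
\end{itemize}

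Your two reflections combine into one. Indeed $QR'=QJRJ=KRJ$, where $K:=QJ=\mathrm{diag}(1,1,-1)$ is the reflection through the image plane. So the improper case reduces to two facts. First, $P(K\cdot h)=P(h)$, since $P$ integrates out $z$. Second, for $\det g=-1$ the map $R\mapsto KRg=(KRK)(Kg)$ is a Haar-preserving bijection of $\mathrm{SO}(3)$. Together these give $P(R\cdot(g\cdot f))=P\bigl(K\cdot((KRg)\cdot f)\bigr)=P((KRg)\cdot f)\stackrel{d}{=}P(R\cdot f)$ in one line.

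Two minor remarks:
\begin{itemize}
\item With $g=Jg'$, you need the $J$-claim for $g'\cdot f$ rather than for $f$. This is harmless, because your argument works for an arbitrary volume and $V$ is rotation invariant.
\item In the spherical-shell model $f$ is specified only through $\widehat f$ on a grid of shells. It is therefore cleaner to run the same argument on the Fourier side than to appeal to band-limitedness to make $P$ well defined. Use $\widehat X_R=S(R\cdot\widehat f)$ and $\widehat{g\cdot f}=g\cdot\widehat f$ for orthogonal $g$. Then $S(K\cdot\widehat h)=S\widehat h$, because $K$ fixes the equatorial plane pointwise.
\end{itemize}
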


\subsection{Fourier slice theorem and spherical-shell model}
\label{subsec:spherical-harmonics}

\paragraph{Fourier slice theorem.} The Fourier slice theorem makes the Fourier domain particularly convenient for analyzing tomographic projections. If $S$ denotes restriction to the equatorial plane, and $\widehat{f} = \mathcal{F}_3 f$, then by the Fourier slice theorem~\cite{bendory2020single,balanov2026group},
\begin{align}
    \mathcal{F}_2 P(R\cdot f)=S(R\cdot \widehat{f}).
    \label{eq:fourier-slice-heuristic}
\end{align}
In spherical coordinates $(r,\theta,\varphi)$, this restriction is simply $(S\widehat{f})(r,\varphi)=\widehat{f}(r,\pi/2,\varphi)$. Thus, tomographic projection becomes an equatorial restriction of the rotated Fourier transform.

\paragraph{Spherical-harmonic representation.} Motivated by this formulation, we represent $\widehat{f}$ in a finite-dimensional spherical-harmonic basis,
\begin{align}
    \widehat{f}(r,\theta,\varphi) = \sum_{\ell=0}^{L}\sum_{s=1}^{S_\ell}\sum_{m=-\ell}^{\ell} a_{\ell,m,s}\,\Psi_{\ell,m,s}(r,\theta,\varphi),
    \label{eq:continuous_model}
\end{align}
where $L$ is the harmonic bandlimit, $S_\ell$ is the number of radial functions associated with degree $\ell$, and $m=-\ell,\ldots,\ell$ indexes the corresponding spherical-harmonic components. We assume that the basis functions separate as
\begin{align}
    \Psi_{\ell,m,s}(r,\theta,\varphi) = F_{\ell,s}(r)Y_\ell^m(\theta,\varphi),
    \label{eq:spherical-harmonic-basis}
\end{align}
where $\{Y_\ell^m\}$ are the spherical harmonics and $\{F_{\ell,s}\}$ are radial functions. Such separable representations include, for example, spherical-prolate bases used in cryo-EM~\cite{sharon2020method,kileel2025fast}. We take the functions $\Psi_{\ell,m,s}$ to be orthonormal in $L^2(\mathbb{R}^3)$. Since the spherical harmonics are orthonormal on $\mathbb{S}^2$, this implies that, for each fixed $\ell$,
\begin{align}
    \int_0^\infty F_{\ell,s}(r)\overline{F_{\ell,t}(r)}\,r^2\,dr = \delta_{st}.
\end{align}
We refer to the collection of coefficients $a_{\ell,m,s}$ for fixed $\ell$
as the harmonic block of degree $\ell$.
\paragraph{Spherical-shell model.} We now specialize~\eqref{eq:continuous_model} to the finite-dimensional spherical-shell model used in our main result. The radial coordinate is discretized into shells; on the shell grid we take $F_{\ell,s}(r)=\delta_{sr}$ for $1\leq s\leq S_\ell$ (see Figure~\ref{fig:1}(a)). We also pass from the complex spherical harmonics to a basis adapted to the reality condition of the underlying volume. Specifically,
\begin{align}
    \widehat{f}(r,\theta,\varphi) = \sum_{\ell=0}^{L}\sum_{s=1}^{S_\ell}\sum_{m=-\ell}^{\ell} A_{\ell,m,s}\,\delta_{sr}\,H_{\ell,m}(\theta,\varphi),
    \label{eq.discrete_model}
\end{align}
where, following~\cite{bandeira2023estimation},
\begin{align}
    H_{\ell,m}(\theta,\varphi)=
    \begin{cases}
        \dfrac{1}{\sqrt{2}}\left(Y_\ell^{m}(\theta,\varphi)+(-1)^{\ell+m}Y_\ell^{-m}(\theta,\varphi)\right), & m>0,\\[1.2ex]
        i^\ell Y_\ell^{0}(\theta,\varphi), & m=0,\\[1.2ex]
        \dfrac{i}{\sqrt{2}}\left(Y_\ell^{|m|}(\theta,\varphi)-(-1)^{\ell+m}Y_\ell^{-|m|}(\theta,\varphi)\right), & m<0.
    \end{cases}
    \label{eq:real-spherical-harmonic-basis}
\end{align}
With this choice, a real-valued volume is represented by real coefficients $A_{\ell,m,s}$. For each degree $\ell$ and shell $s$, define
\begin{align}
    A_\ell[s] := \bigl(A_{\ell,-\ell,s},\ldots,A_{\ell,\ell,s}\bigr)^\top \in\mathbb{R}^{2\ell+1},
    \qquad 1\leq s\leq S_\ell.
    \label{eq:Al-shell-vector}
\end{align}
The matrix $A_\ell\in\mathbb{R}^{(2\ell+1)\times S_\ell}$ whose $s$th column is $A_\ell[s]$ encodes the harmonic block of degree $\ell$.

For the harmonic calculations below, it is convenient to retain the complex spherical-harmonic coordinates from~\eqref{eq:continuous_model}. Let
\begin{align}
    a_\ell[s] := \bigl(a_{\ell,-\ell,s},\ldots,a_{\ell,\ell,s}\bigr)^\top =U_\ell A_\ell[s],
    \label{eq:complex-real-change-of-basis}
\end{align}
where $U_\ell$ is the fixed unitary change-of-basis matrix between the orthonormal bases 
$\{Y_\ell^m\}$ and 
$\{H_{\ell,m}\}$ of the space of spherical harmonic polynomials of degree $\ell$.
Thus $a_\ell[s]$ and $A_\ell[s]$ contain exactly the same information. We use $a_{\ell,m,s}$ in Wigner-$D$ and Clebsch--Gordan formulas and $A_\ell[s]$ for the real coefficient blocks recovered by the algorithm.

\subsection{Fourier representation of a projected observation}
\label{subsec:projection_mixing}

The effect of tomographic projection on the spherical-harmonic representation is explicit. We use the standard complex spherical harmonics
\begin{align}
    Y_\ell^m(\theta,\varphi) = N_{\ell m}P_\ell^m(\cos\theta)e^{im\varphi},
    \qquad
    N_{\ell m} := \sqrt{\frac{2\ell+1}{4\pi}\frac{(\ell-m)!}{(\ell+m)!}},
\end{align}
where $P_\ell^m$ denotes the associated Legendre function with the Condon-Shortley phase convention. By~\eqref{eq:spherical-harmonic-basis}, restriction to the equatorial plane gives
\begin{align}
    S\Psi_{\ell,m,s}(r,\varphi) = F_{\ell,s}(r)N_{\ell m}P_\ell^m(0)e^{im\varphi}.
    \label{eq:spherical-harmonic-restriction}
\end{align}
Since $P_\ell^m(0)=0$ whenever $\ell+m$ is odd, a degree-$\ell$ component survives at planar frequency $m$ only when $\ell$ and $m$ have the same parity.

The viewing rotation acts independently within each harmonic degree through the Wigner $D$-matrices. Specifically, let $D^\ell(R)\in\mathbb{C}^{(2\ell+1)\times(2\ell+1)}$ denote the degree-$\ell$ irreducible representation of $\mathrm{SO}(3)$, with entries defined by
\begin{align}
    Y_\ell^m(R^{-1}\omega) = \sum_{k=-\ell}^{\ell}D_{km}^{\ell}(R)Y_\ell^k(\omega),
    \qquad \omega\in\mathbb{S}^2.
    \label{eq:wigner-action}
\end{align}
Substituting~\eqref{eq:wigner-action} into~\eqref{eq:continuous_model} gives
\begin{align}
    (R\cdot\widehat{f})(r,\theta,\varphi) = \sum_{\ell=0}^{L}\sum_{s=1}^{S_\ell}\sum_{k=-\ell}^{\ell} \left(\sum_{m=-\ell}^{\ell}D_{km}^{\ell}(R)a_{\ell,m,s}\right) F_{\ell,s}(r)Y_\ell^k(\theta,\varphi).
    \label{eq:rotated-fourier-expansion}
\end{align}
Thus, the latent rotation mixes the three-dimensional indices $m$ within each fixed degree $\ell$, without coupling different harmonic degrees.

\paragraph{Two-dimensional Fourier transform of a projection.}
Combining~\eqref{eq:rotated-fourier-expansion} with the Fourier slice theorem~\eqref{eq:fourier-slice-heuristic} and the equatorial restriction~\eqref{eq:spherical-harmonic-restriction}, the two-dimensional Fourier transform of the corresponding noiseless projection is
\begin{align}
    \widehat{X}_R(r,\varphi) &:= \mathcal{F}_2P(R\cdot f)(r,\varphi)
    \nonumber\\
    &=
    \sum_{\ell=0}^{L} \sum_{s=1}^{S_\ell} \sum_{k=-\ell}^{\ell} \left( \sum_{m=-\ell}^{\ell} D_{km}^{\ell}(R)a_{\ell,m,s} \right) F_{\ell,s}(r) N_{\ell k}P_\ell^k(0)e^{ik\varphi}.
    \label{eq:projected-observation-fourier}
\end{align}
Here, $m$ indexes the three-dimensional spherical-harmonic components within a fixed degree $\ell$, while $k$ is the planar Fourier frequency of the projected image. Since $P_\ell^k(0)=0$ whenever $\ell+k$ is odd, a planar frequency $k$ can receive contributions only from degrees $\ell\geq|k|$ having the same parity as $k$.

In the spherical-shell model~\eqref{eq.discrete_model}, let $\rho_s$ denote the radius of shell $s$, and let $\widehat{X}_{R,k}[s]$ denote the $k$th planar Fourier coefficient of the noiseless projected signal on that shell. Thus,
\begin{align}
    \widehat{X}_R(\rho_s,\varphi) = \sum_{k\in\mathbb Z} \widehat{X}_{R,k}[s]e^{ik\varphi} \qquad \widehat{X}_{R,k}[s] = \frac{1}{2\pi} \int_0^{2\pi} \widehat{X}_R(\rho_s,\varphi)e^{-ik\varphi}\,d\varphi.
    \label{eq:projected-angular-expansion}
\end{align}
Using~\eqref{eq:projected-observation-fourier}, these coefficients satisfy
\begin{align}
    \widehat{X}_{R,k}[s] = \sum_{\substack{ \ell\geq|k|,\;\ell\equiv k\!\!\!\pmod 2\\ s\leq S_\ell }} N_{\ell k}P_\ell^k(0) \sum_{m=-\ell}^{\ell}
    D_{km}^{\ell}(R)a_{\ell,m,s}.
    \label{eq:projected-shell-coefficient}
\end{align}
Hence, a fixed observable pair $(k,s)$ can receive contributions from several three-dimensional harmonic degrees,
$\ell=|k|,|k|+2,|k|+4,\ldots$, subject to $s\leq S_\ell$ (see Figure~\ref{fig:1}(a)). This harmonic degree mixing is the key effect of tomographic projection used in the recovery argument below. For the noisy observations~\eqref{eq:cryoEM}, the corresponding coefficients are
\begin{align}
    \widehat{y}_{j,k}[s] = \widehat{X}_{R_j,k}[s] + \widehat\varepsilon_{j,k}[s].
    \label{eq:noisy-projected-shell-coefficient}
\end{align}
These noisy coefficients are used to estimate the signal-only projected moments from finitely many observations.

\subsection{Unprojected and projected moment tensors}
\label{subsec:projected-moments}

We now describe the population moments associated with the Fourier-domain representation introduced above and compare the information retained before and after tomographic projection. 

For each $d\geq1$, and $\eta_1,\ldots,\eta_d\in\mathbb{R}^3$, we define the $d$th-order unprojected Fourier moment tensor by
\begin{align}
    M_d(f)(\eta_1,\ldots,\eta_d) := \mathbb{E}_{R\sim\mathrm{Haar}(\mathrm{SO}(3))} \left[ \prod_{j=1}^{d}(R\cdot\widehat{f})(\eta_j) \right].
    \label{eq:Md-Fourier-definition}
\end{align}
Likewise, recalling from~\eqref{eq:projected-observation-fourier} that $\widehat{X}_R=S(R\cdot\widehat{f})$ is the two-dimensional Fourier transform of a noiseless projection, we define for $\tau_j=(r_j,\varphi_j)\in\mathbb{R}^2$,
\begin{align}
    PM_d(f)(\tau_1,\ldots,\tau_d) := \mathbb{E}_{R\sim\mathrm{Haar}(\mathrm{SO}(3))} \left[ \prod_{j=1}^{d}\widehat{X}_R(\tau_j) \right].
    \label{eq:projected-moment-definition}
\end{align}
By the Fourier slice theorem~\eqref{eq:fourier-slice-heuristic}, equivalently $PM_d=S^{\otimes d}M_d$. Thus, the difference between the unprojected and projected moments is entirely due to the equatorial restriction.

In the spherical-shell model, we represent the projected moments through the planar Fourier coefficients~\eqref{eq:projected-shell-coefficient}. For $d\geq1$, define
\begin{align}
    \mathcal{P}_d(k_1,s_1,\ldots,k_d,s_d) := \mathbb{E}_{R\sim\mathrm{Haar}(\mathrm{SO}(3))} \left[ \prod_{j=1}^{d}\widehat{X}_{R,k_j}[s_j] \right].
    \label{eq:projected-shell-moment}
\end{align}
These are the observable projected-moment coefficients used throughout the recovery argument and estimated empirically from finitely many noisy observations in Section~\ref{sec:constructive-recovery}.

\paragraph{Haar averages of Wigner $D$-matrices.}
The first three moment formulas follow from the corresponding Haar averages of the Wigner $D$-matrices introduced in~\eqref{eq:wigner-action}. With normalized Haar measure on $\mathrm{SO}(3)$,
\begin{align}
    \mathbb{E}_{R\sim\mathrm{Haar}(\mathrm{SO}(3))}\!\left[D_{km}^{\ell}(R)\right] =\delta_{\ell0}\delta_{k0}\delta_{m0}.
    \label{eq:wigner-first-moment}
\end{align}
At second order, Schur orthogonality gives
\begin{align}
    \mathbb{E}_{R\sim\mathrm{Haar}(\mathrm{SO}(3))}\!\left[
        D_{k_1m_1}^{\ell_1}(R)
        D_{k_2m_2}^{\ell_2}(R)
    \right]
    =
    \frac{(-1)^{m_1+k_1}}{2\ell_1+1}
    \delta_{\ell_1\ell_2}
    \delta_{k_1,-k_2}
    \delta_{m_1,-m_2}.
    \label{eq:wigner-second-moment}
\end{align}
At third order,
\begin{align}
    \mathbb{E}_{R\sim\mathrm{Haar}(\mathrm{SO}(3))}\!\left[
        D_{k_1m_1}^{\ell_1}(R)
        D_{k_2m_2}^{\ell_2}(R)
        D_{k_3m_3}^{\ell_3}(R)
    \right]
    =
    \begin{pmatrix}
        \ell_1 & \ell_2 & \ell_3\\
        k_1 & k_2 & k_3
    \end{pmatrix}
    \begin{pmatrix}
        \ell_1 & \ell_2 & \ell_3\\
        m_1 & m_2 & m_3
    \end{pmatrix},
    \label{eq:wigner-third-moment}
\end{align}
where the parentheses denote Wigner $3j$ symbols. In particular, the third-order average vanishes unless $k_1+k_2+k_3=0$, $m_1+m_2+m_3=0$, and
\begin{align}
    |\ell_2-\ell_3|\leq\ell_1\leq\ell_2+\ell_3.
    \label{eq:triangle-condition}
\end{align}
We denote the latter condition by $\Delta(\ell_1,\ell_2,\ell_3)$.

\paragraph{First and second moments.}
Applying~\eqref{eq:wigner-first-moment} to~\eqref{eq:rotated-fourier-expansion}, only the trivial representation $\ell=0$ survives. Hence
\begin{align}
    M_1(f)(\eta) = \sum_{s=1}^{S_0}a_{0,0,s}\Psi_{0,0,s}(\eta).
    \label{eq:M1-unprojected}
\end{align}
Since the degree-zero basis functions are radial, the projected first moment retains the same coefficients. Thus, both $M_1$ and $PM_1$ determine the degree-zero block $A_0$.

At second order,~\eqref{eq:wigner-second-moment} implies that only equal harmonic degrees couple. The unprojected second moment therefore determines the degree-wise quadratic invariants. Under uniformly distributed viewing rotations, the projected second moment determines the same degree-wise Gram matrices~\cite{balanov2026group}. In the real coordinates~\eqref{eq:real-spherical-harmonic-basis}, these invariants are the Gram matrices
\begin{align}
    G_\ell:=A_\ell^\top A_\ell,
    \qquad \ell=0,\ldots,L.
    \label{eq:degree-wise-gram}
\end{align}
Consequently, $PM_2$ determines each real harmonic block $A_\ell$ only up to an independent left orthogonal transformation.

\paragraph{Third moment.}
At third order,~\eqref{eq:wigner-third-moment} couples different harmonic
degrees through the Clebsch-Gordan decomposition~\cite{bendory2025orbit}.
For an admissible degree triple $(\ell_1,\ell_2,\ell_3)$, define the cubic
invariant
\begin{align}
    I_3(\ell_1,s_1,\ell_2,s_2,\ell_3,s_3) &:= \sum_{\substack{ m_1+m_2+m_3=0\\ |m_i|\leq\ell_i }} (-1)^{m_1} \left\langle \ell_2m_2\,\ell_3m_3 \,\middle|\, \ell_1(-m_1) \right\rangle
    \nonumber\\
    &\qquad\times
    a_{\ell_1,m_1,s_1} a_{\ell_2,m_2,s_2} a_{\ell_3,m_3,s_3}.
    \label{eq:I3}
\end{align}
Here $\langle \ell_2m_2\,\ell_3m_3\mid\ell_1(-m_1)\rangle$ is the Clebsch--Gordan coefficient coupling $(\ell_2,m_2)$ and $(\ell_3,m_3)$ to $(\ell_1,-m_1)$.

The relation between these invariants and the unprojected third moment is
most transparent by introducing the known invariant tensor
\begin{align}
    \mathcal{B}^{(3)}_{\ell_1,s_1,\ell_2,s_2,\ell_3,s_3} (\eta_1,\eta_2,\eta_3) & :=
    \frac{1}{2\ell_1+1} \sum_{\substack{ q_1+q_2+q_3=0\\ |q_i|\leq\ell_i }} (-1)^{q_1} \left\langle   \ell_2q_2\,\ell_3q_3 \,\middle|\, \ell_1(-q_1) \right\rangle
    \nonumber\\
    &\qquad\qquad\times
    \Psi_{\ell_1,q_1,s_1}(\eta_1) \Psi_{\ell_2,q_2,s_2}(\eta_2) \Psi_{\ell_3,q_3,s_3}(\eta_3).
    \label{eq:third-invariant-tensor}
\end{align}
Then,~\eqref{eq:wigner-third-moment} yields the decomposition
\begin{align}
    M_3(f)(\eta_1,\eta_2,\eta_3) = \sum_{\substack{ \ell_1,\ell_2,\ell_3\\ s_1,s_2,s_3\\ \Delta(\ell_1,\ell_2,\ell_3) }} I_3(\ell_1,s_1,\ell_2,s_2,\ell_3,s_3)\, \mathcal{B}^{(3)}_{\ell_1,s_1,\ell_2,s_2,\ell_3,s_3} (\eta_1,\eta_2,\eta_3).
    \label{eq:M3-invariant-expansion}
\end{align}
For each admissible degree and shell triple, $\mathcal{B}^{(3)}_{\ell_1,s_1,\ell_2,s_2,\ell_3,s_3}$ spans the one-dimensional $\mathrm{SO}(3)$-invariant subspace of the 
$\mathrm{SO}(3)$ representation spanned by the products $$\{\Psi_{\ell_1, m_1, s_1} (\eta_1) \Psi_{\ell_2, m_2, s_2}(\eta_2) \Psi_{\ell_3, m_3, s_3}(\eta_3)\}_{-\ell_i \leq m_i \leq \ell_i} , $$
corresponding tensor-product block. Moreover, distinct degree and shell blocks are orthogonal. Thus, the coefficients of $M_3$ in this invariant decomposition are precisely the cubic invariants $I_3$, and the unprojected third moment determines them individually.

Tomographic projection destroys this blockwise separation. Combining the shell coefficient formula~\eqref{eq:projected-shell-coefficient} with the third-order identity~\eqref{eq:wigner-third-moment} gives
\begin{align}
    \mathcal{P}_3(k_1,s_1,k_2,s_2,k_3,s_3) = \sum_{\substack{ \ell_i\geq|k_i|,\;\ell_i\equiv k_i\!\!\!\pmod 2\\ s_i\leq S_{\ell_i},\;\Delta(\ell_1,\ell_2,\ell_3) }} C_{\ell_1,k_1,\ell_2,k_2,\ell_3,k_3}\, I_3(\ell_1,s_1,\ell_2,s_2,\ell_3,s_3),
    \label{eq:P3}
\end{align}
where
\begin{align}
    C_{\ell_1,k_1,\ell_2,k_2,\ell_3,k_3} := \frac{(-1)^{k_1}}{2\ell_1+1} \left\langle \ell_2k_2\,\ell_3k_3 \,\middle|\, \ell_1(-k_1) \right\rangle \prod_{i=1}^{3}N_{\ell_i k_i}P_{\ell_i}^{k_i}(0).
    \label{eq:projected-third-moment-coefficient}
\end{align}

For a fixed observable frequency triple $(k_1,k_2,k_3)$, the possible contributing degrees are therefore $\ell_i=|k_i|,|k_i|+2,|k_i|+4,\ldots$, subject also to~\eqref{eq:triangle-condition} and the shell constraints. This is the essential difference between the unprojected and projected third moments: $M_3$ separates the invariants $I_3$ into distinct invariant blocks, whereas an observable coefficient $\mathcal{P}_3$ generally mixes invariants associated with several harmonic degree triples. The main result below shows that the spherical-shell structure allows these contributions to be separated and the volume to be recovered up to its global $\mathrm{O}(3)$ ambiguity.

\section{Main result}
\label{sec:main-result}

We now state our main identifiability result for the spherical-shell model~\eqref{eq.discrete_model}. Section~\ref{subsec:projected-moments} shows that the first projected moment determines the degree-zero block, whereas the second projected moment determines each harmonic block only up to an independent orthogonal transformation. The role of the third projected moment is therefore to couple these blocks and resolve the remaining ambiguities. The main difficulty is that the cubic invariants $I_3$ are not observed individually: by~\eqref{eq:P3}, a fixed planar-frequency triple can receive contributions from several harmonic degree triples.

Our recovery argument exploits the radial-shell structure to separate these mixed contributions. We adopt the convention $S_j:=0$, for $j>L$, so that the same condition applies to the highest harmonic degrees.

\begin{assum}[Decay-by-three shell counts]
\label{assum:decay}
For every $0\leq\ell\leq L$,
\begin{align}
    S_\ell-S_{\ell+2}\geq3.
    \label{eq:decay}
\end{align}
\end{assum}
In particular, for $s$ in the interval $(S_{\ell+2}, S_\ell]$
the coefficients $A_{\ell', s,m'}$ are zero for $\ell' = \ell +2, \ell +4, \ldots$. We refer to the shells indexed by $s \in (S_{\ell+2}, S_\ell]$ as the {\em isolating shells} of degree $\ell$ and denote this set by $T_{\ell}^{\mathrm{iso}}$. The requirement that there are at least three isolating shells in each degree provides the multiplicity needed for the linear systems in the frequency-marching procedure to be solvable. The convention that $S_{j}=0$ for $j > L$ implies that $S_{L-1}$ and $S_L$ are both at least three.


The proof additionally uses the ability to extract shell coefficients of the projected third moment separately. This is automatic in the spherical-shell model: fixing $(s_1,s_2,s_3)$ in~\eqref{eq:projected-shell-moment} selects the corresponding radial coefficient. For more general radial bases, an analogous argument requires sufficient linear independence across harmonic degrees. We return to this modeling issue in Section~\ref{sec:discussion}.

\begin{thm}
\label{thm.shells}
Let $f$ be a generic real volume whose Fourier transform belongs to the spherical-shell model~\eqref{eq.discrete_model}, and suppose that Assumption~\ref{assum:decay} holds. Then the first three projected moments $PM_1(f)$, $PM_2(f)$, and $PM_3(f)$ determine the global $\mathrm{O}(3)$ orbit of $f$.
\end{thm}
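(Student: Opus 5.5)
The plan is a constructive bidirectional frequency march. From $PM_1$ and $PM_2$ we obtain $A_0$ and every Gram matrix $G_\ell=A_\ell^\top A_\ell$. Hence each block is known up to a left factor, $A_\ell = O_\ell B_\ell$ with $O_\ell\in\mathrm{O}(2\ell+1)$, where $B_\ell$ is any fixed factorization of $G_\ell$. The third moment will be used to fix the $O_\ell$ successively, modulo one global $g\in\mathrm{O}(3)$, acting by $D^\ell(g)$ (with the determinant sign for reflections), which is the ambiguity allowed by Lemma~\ref{lem:reflection-invariance}.

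\textbf{Isolating the invariants.} Fix the shell triple $(s_1,s_2,s_3)$ in~\eqref{eq:P3}. If some $s_i\in T^{\mathrm{iso}}_{\ell_i}$ and $k_i\equiv \ell_i \pmod 2$ with $|k_i|\le \ell_i$, then only degrees $\ell'_i\le \ell_i$ of the same parity contribute in slot $i$. Taking $|k_i|=\ell_i$ forces $\ell'_i=\ell_i$ exactly, and $P_{\ell}^{\pm\ell}(0)\neq0$, so the coefficient is nonzero. Thus $\mathcal{P}_3$ gives $I_3$ individually whenever each slot carries an isolating shell at its top planar frequency, subject to $k_1+k_2+k_3=0$. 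When only some slots are isolating, $\mathcal{P}_3$ is a known linear combination of invariants involving the lower-degree blocks in the other slots. These lower blocks are recovered first in the backfill, which makes the resulting system triangular.

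\textbf{Forward march.} We recover $A_\ell[s]$ for $s\in T^{\mathrm{iso}}_\ell$ in increasing $\ell$. Degrees $0$ and $1$ are fixed by choosing $g$: $O_1\in\mathrm{O}(3)$ is absorbed entirely by the global ambiguity, and when $\ell=1$ the reflection sign fixes the handedness choice. Suppose the isolating columns of all degrees $<\ell$ are known. Consider $I_3(\ell,s,\ell-1,t,1,u)$, or $(\ell,\ell-2,2)$ by parity, with $s$ isolating for $\ell$. This invariant is linear in the unknown $a_\ell[s]$ and has coefficients built from known lower columns. Its frequencies satisfy $k_1=\ell$, $k_2=-(\ell-1)$, $k_3=-1$, which sums to zero.

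The obstacle is that a slot with $|k_i|<\ell_i$ admits mixing, so we must choose frequency triples carefully. Using top frequencies on isolating shells in all three slots yields a scalar equation in the single coordinate $m=\pm\ell$ of $a_\ell$ only in special configurations, which is not enough. So we take the whole family $k_1\in\{\ell,\ell-2,\dots\}$. The degrees that mix in slot 1 are then $\ell,\ell-2,\dots$ with $s$ fixed. We want $s$ isolating for $\ell$ but possibly present in lower degrees $\ell-2,\dots$. The contributions from those lower degrees involve inner-shell coefficients that are still unknown, so the equations are linear in $a_\ell[s]$ plus an unknown lower-degree part.

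This is where the condition ``at least three isolating shells'' enters. Varying $s,t,u$ across three isolating shells gives enough equations to eliminate the nuisance terms and solve for $O_\ell$, restricted to the isolating columns, by a linear system of the form $O_\ell B_\ell[T^{\mathrm{iso}}_\ell]$ equal to known data. Generic full rank of the coefficient matrix comes from the local identifiability of~\cite{fan2024maximum} and from exhibiting a single volume at which rank is attained, so the rank condition holds off a Zariski-closed set. Once $O_\ell$ acts on at least $2\ell+1$ independent known vectors or constraints, it is determined. Then the Gram matrix gives every column of $A_\ell$, including the inner shells, since $A_\ell=O_\ell B_\ell$.

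\textbf{Backfill.} If the forward step only yields $O_\ell$ modulo the unresolved inner contributions, we instead march downward from $\ell=L$. At that degree all shells are isolating, so $A_L$ and $A_{L-1}$ are determined once $O_L$ is fixed. The mixed equations for degree $\ell$ then become exactly solvable, because the higher-degree contributions $\ell+2,\dots$ at the same shells are already known and can be subtracted.

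I expect the main difficulty to be verifying that the resulting linear systems are generically of full rank. This amounts to a nonvanishing Clebsch--Gordan determinant, and I would check it by specializing to a convenient sparse volume.
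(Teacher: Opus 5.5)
There is a genuine gap in your forward march. In your ``Isolating the invariants'' paragraph you correctly derive the key reduction, which is the paper's Lemma~\ref{lem:highest-weight}: with $k_1+k_2+k_3=0$, $|k_i|=\ell_i$ and $s_i\in T^{\mathrm{iso}}_{\ell_i}$ in every slot, one has $\mathcal{P}_3(k_1,s_1,k_2,s_2,k_3,s_3)=C_{\ell_1,k_1,\ell_2,k_2,\ell_3,k_3}\,I_3(\ell_1,s_1,\ell_2,s_2,\ell_3,s_3)$ with $C\neq0$. You then discard this for the forward step, claiming that top planar frequencies constrain only the coordinate $m=\pm\ell$ of $a_\ell$ and are ``not enough.'' That conflates the planar frequency $k$ with the three-dimensional index $m$. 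By~\eqref{eq:wigner-third-moment}, the Haar average factors into a $3j$ symbol in the $k$'s times a $3j$ symbol in the $m$'s. The $k$ factor only produces the scalar prefactor $C$, while $I_3$ still sums over all $m_1+m_2+m_3=0$. So $I_3$ is a full linear functional of $a_\ell[s]$.

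Your own example $(k_1,k_2,k_3)=(\ell,-(\ell-1),-1)$ is already such a top-frequency configuration. Take every split $\ell_1+\ell_2=\ell$ with $\ell_i\geq1$ (forced by $|k|=\ell$, $|k_i|<\ell$) and every $s_i\in T^{\mathrm{iso}}_{\ell_i}$. This gives a linear system in $A_\ell[s]$ with no nuisance terms, and it is exactly the paper's forward march (Proposition~\ref{prop:outer-shell}). Generic full rank is taken from~\cite[Proposition~4]{bendory2025orbit}. For $\ell=2$ you can see it directly: the equations pair $a_2[s]$ with the degree-two parts of $a_1[s_1]\otimes a_1[s_2]$. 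These span the five-dimensional space once three isolating columns of $A_1$ are independent.

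The substitute you propose does not close. Moving slot~1 to $|k_1|<\ell$ brings in the unknown inner-shell vectors $a_{\ell-2}[s],a_{\ell-4}[s],\ldots$. Your claim that varying $s,t,u$ over three isolating shells eliminates them is asserted without any equation count or rank argument.

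The next step also fails for dimensional reasons. The isolating columns $A_\ell[T^{\mathrm{iso}}_\ell]$, possibly only three of them, determine $O_\ell\in\mathrm{O}(2\ell+1)$ only on their span. So for $\ell\geq2$ the relation $A_\ell=O_\ell B_\ell$ leaves the inner columns undetermined, which is precisely why a backfill is needed.

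Your backfill starts from $A_L$ and $A_{L-1}$ ``once $O_L$ is fixed'' (note that $O_L$ and $O_{L-1}$ are independent). It also needs known isolating columns in the first two slots. Both inputs are exactly what the forward march must deliver, so as written the downward pass has no established starting point. Once you restore the top-frequency forward march, your backfill description coincides with Proposition~\ref{prop:backfill}: at $|k|=\ell$, subtract the known contributions of degrees $\ell+2,\ell+4,\ldots$ in the third slot.

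Finally, local identifiability from~\cite{fan2024maximum} does not by itself imply full rank of these particular linear systems. You would need the explicit witness you mention, or the cited result.
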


We next describe the constructive recovery procedure underlying Theorem~\ref{thm.shells}. We first present the reconstruction mechanism, consisting of a forward frequency march followed by a reverse backfill. We then prove that, under Assumption~\ref{assum:decay}, the required linear systems are generically solvable at every step, thereby establishing Theorem~\ref{thm.shells}.

\subsection{Constructive recovery}
Theorem~\ref{thm.shells} is proved by an explicit degree-by-degree recovery procedure. The first projected moment determines $A_0$, while the second projected moment determines the Gram matrices $G_\ell=A_\ell^\top A_\ell$. In particular, factoring $G_1$ determines $A_1$ up to a left orthogonal transformation. We fix one such factor, thereby choosing a representative of the global $\mathrm{O}(3)$ orbit.

The remaining coefficients are recovered from the observable third-order coefficients $\mathcal{P}_3$ (as derived in~\eqref{eq:P3}) in two passes. The forward frequency march of Section~\ref{sec:outer-shell}, formalized in Proposition~\ref{prop:outer-shell}, uses isolating shells and outer angular bands to recover the cubic invariants $I_3$ and hence the coefficients $A_\ell[s]$ for $s\in T_\ell^{\mathrm{iso}}$ in increasing harmonic degree. The reverse backfill of Section~\ref{sec:backfill}, formalized in Proposition~\ref{prop:backfill}, proceeds in decreasing harmonic degree: after subtracting the already known higher-degree contributions from $\mathcal{P}_3$, the remaining cubic invariants determine the inner-shell coefficients $A_\ell[s]$ for $s\leq S_{\ell+2}$. Algorithm~\ref{alg:population-recovery} summarizes the resulting population reconstruction, while Figure~\ref{fig:1}(b)-(d) illustrates the forward and reverse stages of the procedure.

The key ingredient in the forward pass is that outer angular bands together with isolating shells reduce an observable projected-moment coefficient to a single cubic invariant.

\begin{lem}[Highest-weight reduction]
\label{lem:highest-weight}
Recall the definition of $C_{\ell_1,k_1,\ell_2,k_2,\ell_3,k_3}$ from~\eqref{eq:projected-third-moment-coefficient}. Let $k_1,k_2,k_3\in\mathbb Z$ satisfy $k_1+k_2+k_3=0$, and set $\ell_i:=|k_i|\leq L$. If $s_i\in T_{\ell_i}^{\mathrm{iso}}$ for $i=1,2,3$, then
\begin{align}
    \mathcal{P}_3(k_1,s_1,k_2,s_2,k_3,s_3) = C_{\ell_1,k_1,\ell_2,k_2,\ell_3,k_3}\, I_3(\ell_1,s_1,\ell_2,s_2,\ell_3,s_3).
    \label{eq:hw-reduction}
\end{align}
Moreover, $C_{\ell_1,k_1,\ell_2,k_2,\ell_3,k_3}\neq 0$.
\end{lem}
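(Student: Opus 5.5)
Starting from the expansion \eqref{eq:P3}, I will show that the isolating-shell hypothesis kills every term except $(\ell_1,\ell_2,\ell_3)=(|k_1|,|k_2|,|k_3|)$, and then check by direct evaluation that the surviving coefficient is nonzero.

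\emph{Reduction to one term.} Fix $i$. The sum in \eqref{eq:P3} runs over $\ell_i'\ge |k_i|$ with $\ell_i'\equiv k_i \pmod 2$ and $s_i\le S_{\ell_i'}$. Every candidate other than $\ell_i'=\ell_i=|k_i|$ satisfies $\ell_i'\geq \ell_i+2$. The shell counts are nonincreasing in steps of two; Assumption~\ref{assum:decay} gives $S_{\ell+2}\le S_\ell-3$, and iterating gives $S_{\ell_i'}\le S_{\ell_i+2}$. Since $s_i\in T^{\mathrm{iso}}_{\ell_i}$ means $s_i>S_{\ell_i+2}$, we get $s_i>S_{\ell_i'}$, so that term is excluded. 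This already forces $\ell_i'=\ell_i$ for each $i$ separately.

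The remaining triple must still satisfy $\Delta(\ell_1,\ell_2,\ell_3)$. Because $k_1+k_2+k_3=0$, after relabeling one absolute value equals the sum of the other two. So $(|k_1|,|k_2|,|k_3|)$ is triangular, with equality in the triangle inequality. This yields \eqref{eq:hw-reduction}.

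\emph{Nonvanishing of the coefficient.} By \eqref{eq:projected-third-moment-coefficient} there are two factors to check.
\begin{itemize}
\item[(a)] $N_{\ell k}P_\ell^{k}(0)\neq0$ when $\ell=|k|$. Here $P_\ell^{\pm\ell}(x)$ is a nonzero constant multiple of $(1-x^2)^{\ell/2}$, which is nonzero at $x=0$. The normalization $N_{\ell k}>0$.
\item[(b)] $\langle \ell_2k_2\,\ell_3k_3\mid \ell_1(-k_1)\rangle\neq0$ with $\ell_i=|k_i|$.
\end{itemize}

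\emph{Factor (b), the main obstacle.} This is a stretched (extremal) Clebsch--Gordan coefficient, and I split by sign pattern. Up to relabeling, two of the $k_i$ share a sign and the third has the opposite sign and the largest modulus.

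\begin{itemize}
\item Suppose first that $k_2$ and $k_3$ have the same sign, say both $\geq0$, so $\ell_1=\ell_2+\ell_3$ and $-k_1=\ell_2+\ell_3$. The coefficient $\langle \ell_2\ell_2\,\ell_3\ell_3\mid (\ell_2+\ell_3)(\ell_2+\ell_3)\rangle$ equals $1$, since the top-weight vector of the top irreducible is the tensor of the top-weight vectors. The case where both are negative follows by the symmetry $m\mapsto -m$, up to sign.
\item In the other patterns, one of $\ell_2,\ell_3$ is the largest, for example $\ell_2=\ell_1+\ell_3$. I would use the Clebsch--Gordan symmetry relating $\langle \ell_2m_2\,\ell_3m_3|\ell_1m_1\rangle$ to $\langle \ell_1(-m_1)\,\ell_3m_3|\ell_2(-m_2)\rangle$ up to a sign and a factor $\sqrt{(2\ell_1+1)/(2\ell_2+1)}$. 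Equivalently, the $3j$ symbol $\begin{pmatrix}\ell_1&\ell_2&\ell_3\\k_1&k_2&k_3\end{pmatrix}$ is invariant up to sign under column permutations. This reduces the claim to the stretched case above.
\end{itemize}

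Alternatively, the explicit formula for the $3j$ symbol with $\ell_3=\ell_1+\ell_2$, $m_3=-(m_1+m_2)$,
\[
\pm\Bigl[\tfrac{(2\ell_1)!(2\ell_2)!(\ell_3+m_3)!(\ell_3-m_3)!}{(2\ell_3+1)!(\ell_1+m_1)!(\ell_1-m_1)!(\ell_2+m_2)!(\ell_2-m_2)!}\Bigr]^{1/2},
\]
is visibly nonzero, and it handles all cases at once. The degenerate case $k_i=0$, where $\ell_i=0$, is also covered: the Clebsch--Gordan coefficient is then $1$ and $P_0^0(0)=1$.
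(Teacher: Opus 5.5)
Your proposal is correct and follows the paper's proof: the parity constraint, the isolating-shell condition and the monotone decay of $S_\ell$ along each parity class force $\ell_i'=|k_i|$, and $C\neq0$ follows from $P_\ell^{\pm\ell}(0)\neq0$ together with the nonvanishing of the extremal (stretched) Clebsch--Gordan coefficient, which you justify more explicitly than the paper does. One harmless slip: when $k_1=0$, the coefficient $\langle \ell_2k_2\,\ell_2(-k_2)\mid 00\rangle$ equals $(2\ell_2+1)^{-1/2}$ rather than $1$, but it is still nonzero and is already covered by your stretched $3j$ formula.
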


\begin{proof}
Consider a nonzero term in~\eqref{eq:P3} with harmonic degrees $(\ell_1',\ell_2',\ell_3')$. Since $P_{\ell_i'}^{k_i}(0)\neq 0$ only when $\ell_i'+k_i$ is even, and $\ell_i=|k_i|$ has the same parity as $k_i$, any contributing degree must satisfy $\ell_i' \in \{\ell_i,\ell_i+2,\ell_i+4,\ldots\}$. On the other hand, $s_i\in T_{\ell_i}^{\mathrm{iso}}$ implies $s_i>S_{\ell_i+2}$. Assumption~\ref{assum:decay} makes the shell counts decrease along each parity sequence, so $s_i>S_{\ell_i'}$ for every $\ell_i'\geq\ell_i+2$. The constraint $s_i\leq S_{\ell_i'}$ in~\eqref{eq:P3} therefore forces $\ell_i'=\ell_i$ for each $i$, and the sum reduces to~\eqref{eq:hw-reduction}.

It remains to verify that the coefficient is nonzero. Since $\ell_i=|k_i|$, we have $P_{\ell_i}^{k_i}(0)\neq 0$. Moreover, $k_1+k_2+k_3=0$ implies the triangle inequalities for $(\ell_1,\ell_2,\ell_3)$, and the corresponding extremal Clebsch--Gordan coefficient $\left\langle \ell_2k_2\,\ell_3k_3  \,\middle|\,  \ell_1(-k_1) \right \rangle$ is nonzero. Hence~\eqref{eq:projected-third-moment-coefficient} gives $C_{\ell_1,k_1,\ell_2,k_2,\ell_3,k_3}\neq 0$.
\end{proof}

\begin{algorithm}[H]
    \caption{Population recovery from the first three projected moments}
    \label{alg:population-recovery}
    \begin{algorithmic}[1]
        \Statex \textbf{Input:} $PM_1(f)$, $PM_2(f)$, and $PM_3(f)$; bandlimit $L$; shell counts $S_0,\ldots,S_L$ satisfying Assumption~\ref{assum:decay}.
        \Statex \textbf{Output:} Coefficients $\{A_\ell[s]\}$ up to a global $\mathrm{O}(3)$ transformation.

        \State Recover $A_0$ from $PM_1(f)$.
        \State Recover $G_1=A_1^\top A_1$ from $PM_2(f)$, factor $G_1$, and fix a representative $A_1$ of the global $\mathrm{O}(3)$ orbit.

        \Statex \underline{Forward frequency march} (Section~\ref{sec:outer-shell}).
        \For{$\ell=2,\ldots,L$}
            \For{$s\in T_\ell^{\mathrm{iso}}$}
                \State Recover the cubic invariants $I_3$ involving $A_\ell[s]$ from $\mathcal{P}_3$ using~\eqref{eq:outer-reduced}.
                \State Recover $A_\ell[s]$ by solving the linear system~\eqref{eq:outer-linear-equation} over the admissible lower-degree pairs.
            \EndFor
        \EndFor

        \Statex \underline{Reverse backfill} (Section~\ref{sec:backfill}).
        \For{$\ell=L-2,\ldots,2$}
            \For{$s=1,\ldots,S_{\ell+2}$}
                \State Recover the degree-$\ell$ cubic invariants $I_3$ by subtracting the known contributions of degrees $\ell+2,\ell+4,\ldots$ from $\mathcal{P}_3$ using~\eqref{eq:backfill-decomposition}.
                \State Recover $A_\ell[s]$ by solving the linear system~\eqref{eq:backfill-linear-equation}.
            \EndFor
        \EndFor

        \State \Return $\{A_\ell[s]\}$.
    \end{algorithmic}
\end{algorithm}

\subsection{Outer-shell recovery}
\label{sec:outer-shell}

We first recover the coefficients on the isolating shells by marching upward in harmonic degree.

\begin{proposition}[Outer-shell recovery]
\label{prop:outer-shell}
Suppose that $A_1$ has been fixed up to the global $\mathrm{O}(3)$ ambiguity. Then, for a generic signal, the projected third moment determines $A_\ell[s]$, for $2\leq\ell\leq L$, and $s\in T_\ell^{\mathrm{iso}}$.
\end{proposition}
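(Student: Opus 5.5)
We argue by induction on $\ell\ge 2$. The inductive hypothesis is that $A_0$, $A_1$ and all isolating-shell blocks $A_{\ell'}[t]$ for $2\le \ell'<\ell$, $t\in T_{\ell'}^{\mathrm{iso}}$, are already known, in the gauge fixed by the chosen factor $A_1$. The base data come from the first two moments: $A_0$ from $PM_1$, and $A_1$ fixed from the factorization of $G_1$.

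\textbf{Linear equations in $a_\ell[s]$.} Fix $\ell$ and $s\in T_\ell^{\mathrm{iso}}$. Choose planar frequencies $k_1=\ell$ and $k_2,k_3$ with $k_2+k_3=-\ell$ and $|k_2|,|k_3|<\ell$. For example, $(k_2,k_3)=(-(\ell-1),-1)$, or more generally $k_2=-j$ and $k_3=-(\ell-j)$ for $1\le j\le \ell-1$. Set $\ell_2=|k_2|$ and $\ell_3=|k_3|$, and pick shells $s_2\in T_{\ell_2}^{\mathrm{iso}}$, $s_3\in T_{\ell_3}^{\mathrm{iso}}$. For $\ell_i\in\{0,1\}$ we use any shells of those blocks, since they are fully known. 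We also note that $T_0^{\mathrm{iso}}$ and $T_1^{\mathrm{iso}}$ are nonempty by Assumption~\ref{assum:decay}.

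Lemma~\ref{lem:highest-weight} then gives
\[
I_3(\ell,s,\ell_2,s_2,\ell_3,s_3)=\mathcal P_3(\ell,s,k_2,s_2,k_3,s_3)/C_{\ell,\ell,\ell_2,k_2,\ell_3,k_3}.
\]
A small point needs care here. The lemma requires the isolating condition for each of the three degrees, including degrees $0$ and $1$. If this fails, the extra contributions involve only degrees $\ell_i+2,\dots$ at the chosen shells. Those are either themselves isolating blocks of lower degree than $\ell$ already recovered, or we simply restrict to isolating shells, which exist in every degree. We will therefore always take $s_2,s_3$ isolating.

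By~\eqref{eq:I3}, each recovered invariant is linear in $a_\ell[s]$:
\[
I_3=\langle w(\ell_2,s_2,\ell_3,s_3),a_\ell[s]\rangle,
\]
where $w$ has $m$-th entry $(-1)^{m}\sum_{m_2+m_3=-m}\langle \ell_2m_2\,\ell_3m_3|\ell(-m)\rangle a_{\ell_2,m_2,s_2}a_{\ell_3,m_3,s_3}$. This vector is the Clebsch--Gordan projection of $a_{\ell_2}[s_2]\otimes a_{\ell_3}[s_3]$ onto the degree-$\ell$ component, and it is known by induction. Stacking all admissible pairs $(\ell_2,s_2),(\ell_3,s_3)$ with $\ell_2+\ell_3=\ell$ gives a linear system $W a_\ell[s]=b$. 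Passing to real coordinates via $U_\ell$ gives $A_\ell[s]$.

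\textbf{Counting and genericity.} It remains to show that $W$ generically has rank $2\ell+1$; this is the main obstacle. Since the entries of $W$ are polynomials in the lower-degree coefficients, it suffices to exhibit one configuration in which $W$ has full rank. The number of rows is at least $\sum_{j}|T_j^{\mathrm{iso}}||T_{\ell-j}^{\mathrm{iso}}|$ over unordered pairs, which is at least $9\lceil(\ell-1)/2\rceil$ plus the $(0,\ell)$-type terms. This comfortably exceeds $2\ell+1$; that is why three isolating shells are assumed.

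For the witness we first try the pair $(\ell_2,\ell_3)=(1,\ell-1)$ together with $(\ell_2,\ell_3)=(2,\ell-2)$. The map $v\mapsto \Pi_\ell(a_1\otimes v)$ from $V_{\ell-1}$ to $V_\ell$ is $\mathrm{SO}(3)$-equivariant in $a_1$. With three generic vectors $a_1[t]$, and using $s_2=t$ with $\ell_3=\ell-1$ and three shells $s_3$, the question becomes whether the vectors $\Pi_\ell(a_1[t]\otimes a_{\ell-1}[u])$, $t,u\in\{1,2,3\}$, span $V_\ell$.

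We plan to verify this by a specialization argument. Take $a_{\ell-1}[u]$ to be highest-weight-type vectors rotated generically, or use the fact that the image of $V_1\otimes V_{\ell-1}\to V_\ell$ is all of $V_\ell$ by irreducibility. The pure tensors $x\otimes y$ span $V_1\otimes V_{\ell-1}$, so for generic choices the images of finitely many pure tensors span $V_\ell$. We need $2\ell+1$ of them, however, and a $3\times 3$ grid gives only $9$. For large $\ell$ we therefore add pairs $(j,\ell-j)$ for $j=2,\dots$ and use the full collection of rows.

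A cleaner route, which we would also try, is to set up an irreducibility-based argument. The span of $\{\Pi_\ell(x\otimes y)\}$ over generic $x$ in the span of the isolating $a_j$'s is $\mathrm{SO}(3)$-stable only after accounting for rotations, and full rank follows from a dimension count on the Zariski-open set. If this is delicate, we fall back on checking a single explicit numerical or symbolic witness for each $\ell$ via a monomial (leading-term) argument in the $m$-indices.

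Once $W$ has full rank generically, the induction proceeds over all $s\in T_\ell^{\mathrm{iso}}$ and then to $\ell+1$.
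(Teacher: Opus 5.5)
Your reduction is the same as the paper's: induction on $\ell$, Lemma~\ref{lem:highest-weight} to extract a single $I_3$ from $\mathcal{P}_3$, and linearity of $I_3$ in $a_\ell[s]$. The gap is the one step with real content: that the stacked matrix $W$ has rank $2\ell+1$ for generic lower-degree data. You name it as the main obstacle but only list plans, none carried out. The paper handles this step by appeal to \cite[Proposition~4]{bendory2025orbit}; your proposal neither cites such a result nor proves it, and the routes you sketch do not work as stated.

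The constraints $k_1=\ell$ and $|k_2|,|k_3|<\ell$ force $k_2,k_3<0$. So every row is a Cartan product $\Pi_\ell(x\otimes y)$ with $x\in X_j:=\mathrm{span}\{a_j[t]:t\in T_j^{\mathrm{iso}}\}$, $y\in X_{\ell-j}$, $1\leq j\leq\ell-1$. In particular there are no ``$(0,\ell)$-type'' rows, and for even $\ell$ the middle pair gives only $6$ distinct rows. With three isolating shells each $X_j$ is only $3$-dimensional. Hence neither irreducibility of $V_\ell$ nor the fact that pure tensors span $V_j\otimes V_{\ell-j}$ says anything about $\sum_j\Pi_\ell(X_j\otimes X_{\ell-j})$. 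A row count $\geq 2\ell+1$ plus Zariski-openness also proves nothing until you exhibit one point where the rank is full.

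Your leading-term fallback is the right idea, and it closes the gap quickly. Since $\ell_2+\ell_3=\ell$, every Clebsch--Gordan coefficient in $w$ is a stretched one, $\langle j\,m_2\,(\ell-j)\,m_3\,|\,\ell\,(m_2+m_3)\rangle\neq0$. Take each lower-degree shell vector to be a weight vector, i.e.\ a single nonzero complex coordinate. Then every row is a nonzero multiple of the coordinate vector indexed by $-(m_2+m_3)$, and the rank equals the number of distinct sums.

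For each unordered pair $\{j,\ell-j\}$ with $j<\ell-j$, use weights $\{c,c+1,c+2\}$ in degree $j$ and $\{d,d+3,d+6\}$ in degree $\ell-j$. This gives $9$ consecutive sums, which can be placed anywhere in $[-\ell,\ell]$ when $\ell\geq4$. Since $9(\lceil\ell/2\rceil-1)\geq2\ell+1$ for $\ell\geq4$, these windows cover $\{-\ell,\dots,\ell\}$. For $\ell=2,3$, use $\{-1,0,1\}+\{-1,0,1\}$ and $\{-1,0,1\}+\{-2,0,2\}$.

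A $(2\ell+1)$-minor that is nonzero at this complex point is a nonzero polynomial, hence nonzero on a Zariski-dense set of real coefficients. Because $I_3$ is $\mathrm{O}(3)$-invariant, $W$ at the gauge-fixed recovered vectors equals $W$ at the true vectors times an invertible matrix. So the rank does not depend on which factor of $G_1$ was chosen. With this witness, or with the citation, your argument is complete.
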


\begin{proof}
We proceed by induction on $\ell$. Suppose all isolating-shell vectors below degree $\ell$ have been recovered. Through~\eqref{eq:complex-real-change-of-basis}, their complex-coordinate vectors are therefore also known. Fix $s\in T_\ell^{\mathrm{iso}}$ and choose planar frequencies $k_1,k_2,k$ satisfying $k_1+k_2+k=0$, $|k|=\ell$, and $\ell_i:=|k_i|<\ell$ for $i=1,2$. For any $s_i\in T_{\ell_i}^{\mathrm{iso}}$, Lemma~\ref{lem:highest-weight} gives
\begin{align}
    \mathcal{P}_3(k_1,s_1,k_2,s_2,k,s) =C_{\ell_1,k_1,\ell_2,k_2,\ell,k}\, I_3(\ell_1,s_1,\ell_2,s_2,\ell,s),
    \label{eq:outer-reduced}
\end{align}
where the prefactor is known and nonzero.

Since the first two coefficient vectors are already known,~\eqref{eq:I3} is linear in the unknown complex vector $a_\ell[s]$:
\begin{align}
    I_3(\ell_1,s_1,\ell_2,s_2,\ell,s) =\sum_{m=-\ell}^{\ell} B_m\!\left(a_{\ell_1}[s_1],a_{\ell_2}[s_2]\right)a_{\ell,m,s},
    \label{eq:outer-linear-equation}
\end{align}
where
\begin{align}
    B_m\!\left(a_{\ell_1}[s_1],a_{\ell_2}[s_2]\right) :=\sum_{\substack{m_1+m_2+m=0\\ |m_i|\leq\ell_i}} (-1)^{m_1} \left\langle \ell_2m_2\,\ell m\,\middle|\,\ell_1(-m_1)\right\rangle a_{\ell_1,m_1,s_1}a_{\ell_2,m_2,s_2}.
    \label{eq:outer-linear-coefficients}
\end{align}
Using $a_\ell[s]=U_\ell A_\ell[s]$, these equations form a real linear system for the $2\ell+1$ entries of $A_\ell[s]$. 

It remains to show that sufficiently many choices of $(k_1,k_2,s_1,s_2)$ yield a full-rank system. The coefficients in \eqref{eq:outer-linear-coefficients} are quadratic polynomials in the already recovered lower-degree coefficients and may be viewed as linear forms in the corresponding quadratic monomials. As in~\cite[Proposition~4]{bendory2025orbit}, the resulting coefficient matrix has rank $2\ell+1$ for a generic choice of the lower-degree vectors; the fixed invertible change of basis $U_\ell$ preserves this rank. Assumption~\ref{assum:decay} provides at least three isolating shells at each degree, supplying the shell multiplicity required by this construction. Thus $A_\ell[s]$ is uniquely determined for every $s\in T_\ell^{\mathrm{iso}}$, completing the induction.
\end{proof}

Thus, after the forward march, all vectors $A_\ell[s]$ with $s\in T_\ell^{\mathrm{iso}}$ are known. In particular, since $S_{L+2}=S_{L+1}=0$, all shells of degrees $L$ and $L-1$ are recovered.

\subsection{Inner-shell backfill}
\label{sec:backfill}

It remains to recover the coefficients on the inner shells $s\leq S_{\ell+2}$. We do so by descending harmonic degree. At each step, isolating shells fix the first two harmonic degrees in $\mathcal{P}_3$, while contributions from already recovered higher degrees in the third factor can be subtracted.

\begin{proposition}[Inner-shell backfill]
\label{prop:backfill}
Fix $2\leq\ell<L$ and suppose that $A_{\ell'}[s]$ is known for every $\ell'>\ell$ and every $1\leq s\leq S_{\ell'}$. Then, for a generic signal, the projected third moment determines $A_\ell[s]$ for every $s\leq S_{\ell+2}$.
\end{proposition}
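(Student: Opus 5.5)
The plan mirrors the proof of Proposition~\ref{prop:outer-shell}. The difference is that the third factor now sits on an inner shell, so the projected coefficient mixes several degrees in that slot, and the known higher-degree contributions must be subtracted.

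Fix $s\leq S_{\ell+2}$ and take planar frequencies $k_1+k_2+k=0$ with $|k|=\ell$ and $\ell_i:=|k_i|<\ell$. Choose the other two shells isolating, $s_i\in T_{\ell_i}^{\mathrm{iso}}$, exactly as in the forward march. By the argument of Lemma~\ref{lem:highest-weight}, the isolating shells force $\ell_i'=\ell_i$ in the first two slots of~\eqref{eq:P3}. In the third slot, however, any $\ell'\in\{\ell,\ell+2,\ell+4,\ldots\}$ with $s\leq S_{\ell'}$ and $\Delta(\ell_1,\ell_2,\ell')$ may contribute. This gives the decomposition
\begin{align}
    \mathcal{P}_3(k_1,s_1,k_2,s_2,k,s) = C_{\ell_1,k_1,\ell_2,k_2,\ell,k}\, I_3(\ell_1,s_1,\ell_2,s_2,\ell,s) + \sum_{\substack{\ell'\geq\ell+2,\ \ell'\equiv\ell\!\!\!\pmod 2\\ s\leq S_{\ell'},\ \Delta(\ell_1,\ell_2,\ell')}} C_{\ell_1,k_1,\ell_2,k_2,\ell',k}\, I_3(\ell_1,s_1,\ell_2,s_2,\ell',s).
\end{align}
Each term in the tail involves only $a_{\ell_1}[s_1]$, $a_{\ell_2}[s_2]$ (known from Proposition~\ref{prop:outer-shell}) and $a_{\ell'}[s]$ with $\ell'>\ell$ (known by hypothesis), so the tail is a known number. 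The leading coefficient is nonzero by the extremal Clebsch--Gordan argument of Lemma~\ref{lem:highest-weight}, because $\ell_i=|k_i|$ and $\ell=|k|$. Hence $I_3(\ell_1,s_1,\ell_2,s_2,\ell,s)$ is determined. In the spirit of the paper's forward references, this is the backfill decomposition~\eqref{eq:backfill-decomposition}.

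Next, exactly as in~\eqref{eq:outer-linear-equation}, this invariant is linear in the unknown $a_\ell[s]=U_\ell A_\ell[s]$. The coefficient vector $B_m(a_{\ell_1}[s_1],a_{\ell_2}[s_2])$ is the same as in the forward pass, and this linear system is the one I would label~\eqref{eq:backfill-linear-equation}. Ranging over admissible $(k_1,k_2,s_1,s_2)$ with $\ell_1,\ell_2<\ell$ yields a real linear system in the $2\ell+1$ entries of $A_\ell[s]$. The key observation is that its coefficient matrix is identical to the one already used for the isolating shells of degree $\ell$ in Proposition~\ref{prop:outer-shell}. It depends only on the lower-degree isolating-shell vectors, not on $s$. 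Only the right-hand side changes, through the subtraction above. So the generic rank $2\ell+1$ established there, via~\cite[Proposition~4]{bendory2025orbit} and the three isolating shells guaranteed by Assumption~\ref{assum:decay}, applies verbatim, and $A_\ell[s]$ is uniquely determined.

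The main obstacle I expect is the bookkeeping that keeps the rank statement valid. The pairs $(\ell_1,\ell_2)$ used must all have isolating shells, so that slots one and two are pinned. They must also be the same pairs as in the forward pass, so that the rank claim transfers without re-proof. This requires $\ell\geq2$, so that frequencies with $|k_i|<\ell$ exist; the case $\ell=1$ is handled by the Gram factorization of $G_1$. Genericity is unaffected, since the coefficient matrix is a polynomial in generic lower-degree data. Applying the proposition for $\ell=L-2,\ldots,2$, starting from the fact that degrees $L$ and $L-1$ are fully recovered after the forward march, completes the induction.
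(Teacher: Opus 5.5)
Your proposal is correct and follows the same route as the paper's proof. Isolating shells pin the first two degrees, the known higher-degree tail $\ell'>\ell$ in the third slot is subtracted using the nonzero extremal coefficient $C_{\ell_1,k_1,\ell_2,k_2,\ell,k}$, and the resulting linear system for $A_\ell[s]$ is reused from Proposition~\ref{prop:outer-shell}; your restriction to $|k_i|<\ell$ and your observation that its coefficient matrix depends only on the lower-degree isolating-shell vectors (not on $s$) make explicit what the paper compresses into ``the same generic full-rank real system.''
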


\begin{proof}
Fix $s\leq S_{\ell+2}$ and choose planar frequencies $k_1,k_2,k$ satisfying $k_1+k_2+k=0$ and $|k|=\ell$. Set $\ell_i:=|k_i|$ and choose $s_i\in T_{\ell_i}^{\mathrm{iso}}$ for $i=1,2$. Since the first two shell indices are isolating, the first two harmonic degrees in~\eqref{eq:P3} are forced to be $\ell_1$ and $\ell_2$. The third harmonic degree can take the values $\ell,\ell+2,\ell+4,\ldots$ for which $s\leq S_{\ell'}$. Consequently,
\begin{align}
    \mathcal{P}_3(k_1,s_1,k_2,s_2,k,s) &=C_{\ell_1,k_1,\ell_2,k_2,\ell,k}\, I_3(\ell_1,s_1,\ell_2,s_2,\ell,s)
    \nonumber\\
    &\quad+ \sum_{\substack{\ell'>\ell,\;\ell'\equiv\ell\!\!\!\pmod 2\\ s\leq S_{\ell'}}} C_{\ell_1,k_1,\ell_2,k_2,\ell',k}\, I_3(\ell_1,s_1,\ell_2,s_2,\ell',s).
    \label{eq:backfill-decomposition}
\end{align}
Every term in the sum over $\ell'>\ell$ is known: the first two coefficient vectors were recovered in the forward march, while $A_{\ell'}[s]$ is known by the reverse-induction hypothesis. Since $C_{\ell_1,k_1,\ell_2,k_2,\ell,k}\neq 0$, subtracting the higher-degree terms determines $I_3(\ell_1,s_1,\ell_2,s_2,\ell,s)$.

As in~\eqref{eq:outer-linear-equation}, this invariant is linear in $a_\ell[s]$,
\begin{align}
    I_3(\ell_1,s_1,\ell_2,s_2,\ell,s) = \sum_{m=-\ell}^{\ell} B_m\!\left(a_{\ell_1}[s_1],a_{\ell_2}[s_2]\right)a_{\ell,m,s}.
    \label{eq:backfill-linear-equation}
\end{align}
After composing with $a_\ell[s]=U_\ell A_\ell[s]$, varying the admissible frequencies and isolating shells yields the same generic full-rank real system established in Proposition~\ref{prop:outer-shell}. Hence $A_\ell[s]$ is uniquely determined for a generic signal.
\end{proof}

Starting from the highest degrees and applying Proposition~\ref{prop:backfill} successively for decreasing $\ell$ recovers every remaining block with $\ell\geq2$. The blocks $A_0$ and $A_1$ were already fixed by the first two projected moments. Together with Proposition~\ref{prop:outer-shell}, this determines all coefficient matrices $A_\ell$ up to the single global $\mathrm{O}(3)$ ambiguity and completes the proof of Theorem~\ref{thm.shells}.

\section{Numerical experiments}
\label{sec:constructive-recovery}

The proof of Theorem~\ref{thm.shells} is constructive and directly yields a recovery algorithm from the first three projected moments. We first describe its finite-sample implementation, and then introduce a complementary direct optimization scheme that estimates the volume coefficients by fitting the projected moments jointly. We evaluate both approaches numerically on synthetic signals and molecular volumes. The experiments examine the effect of Assumption~\ref{assum:decay}, robustness to additive noise, the dependence of the reconstruction error on the number of observations, and the performance of direct moment-based optimization beyond the structural conditions required by the constructive proof.

\subsection{Finite-sample implementation}
\label{subsec:finite-sample-implementation}

We implement the population recovery procedure of Algorithm~\ref{alg:population-recovery} from finitely many noisy projections $\{y_j\}_{j=1}^n$. For the $j$th observation, let $\widehat{y}_{j,k}[s]$ denote its Fourier-domain coefficient at shell $s$ and planar frequency $k$, as defined in~\eqref{eq:noisy-projected-shell-coefficient}. For $d=1,2,3$, we first form the empirical moments of the noisy coefficients,
\begin{align}
    \widetilde{Q}_d(k_1,s_1,\ldots,k_d,s_d) := \frac{1}{n} \sum_{j=1}^{n} \prod_{i=1}^{d} \widehat{y}_{j,k_i}[s_i].
    \label{eq:empirical-noisy-projected-moments}
\end{align}
These estimate the corresponding population moments of the signal-plus-noise observations, rather than the signal-only coefficients $\mathcal{P}_d$ defined in~\eqref{eq:projected-shell-moment}.

Since the additive noise is Gaussian with known covariance, its contribution to moments of orders $d\leq3$ consists of explicit lower-order terms. We subtract these terms from $\widetilde{Q}_d$ using the standard Gaussian moment corrections; see, e.g.,~\cite[Appendix~C.1]{balanov2026generalized}. We denote the resulting debiased estimator by
\begin{align}
    \widehat{\mathcal{P}}_d = \widetilde{Q}_d - \text{noise correction},
    \qquad d=1,2,3,
    \label{eq:debiased-projected-moments}
\end{align}
where the correction is zero at first order and consists of the known noise-covariance contributions at second and third order. Thus, $\widehat{\mathcal{P}}_d$ estimates the signal-only projected moment $\mathcal{P}_d$, and these debiased moments are the quantities supplied to the finite-sample reconstruction algorithms below.

The finite-sample reconstruction follows Algorithm~\ref{alg:population-recovery}, with the population moment coefficients $\mathcal{P}_d$ replaced by their empirical estimates $\widehat{\mathcal{P}}_d$. The first two empirical moments yield estimates $\widehat{A}_0$ and $\widehat{G}_1$ of $A_0$ and $G_1=A_1^\top A_1$, respectively, from which a representative $\widehat{A}_1$ is chosen to fix the global $\mathrm{O}(3)$ ambiguity. The forward and reverse passes then recover estimates $\widehat{A}_\ell[s]$ of the remaining coefficient vectors. In each pass, the exact linear equations derived in Sections~\ref{sec:outer-shell} and~\ref{sec:backfill} are replaced by their empirical counterparts, constructed from $\widehat{\mathcal{P}}_3$ and the coefficient estimates recovered in the preceding steps, and solved by least squares. Whenever the Clebsch--Gordan formulas are evaluated, the estimated real coefficient vectors are converted to complex spherical-harmonic coordinates through $\widehat{a}_\ell[s]=U_\ell\widehat{A}_\ell[s]$, in accordance with~\eqref{eq:complex-real-change-of-basis}.

\subsection{Experimental setup}
\label{sec:numerical-setup}

We use two molecular volumes: the TRPV1 structure (EMD-8117) and the \emph{Plasmodium falciparum} 80S ribosome (EMD-2660), both available from the Electron Microscopy Data Bank (EMDB).\footnote{\url{https://www.ebi.ac.uk/emdb/}} We center each volume, restrict it to the unit ball, downsample it to $31^3$ voxels, and normalize it to unit Frobenius norm.

For each experiment, we sample the three-dimensional Fourier transform on $K$ concentric spherical shells with equally spaced radii
\begin{align}
    \rho_s = \left(s-\tfrac{1}{2}\right)\frac{\rho_{\max}}{K},
    \qquad
    s=1,\ldots,K.
    \label{eq:numerical-shell-radii}
\end{align}
On each shell, we expand the harmonic dependence in the real spherical-harmonic basis~\eqref{eq:real-spherical-harmonic-basis} up to degree $L$. Thus, the numerical representation is precisely the spherical-shell model~\eqref{eq.discrete_model}, with the shell index $s$ corresponding to the common physical radius $\rho_s$. The coefficients $A_{\ell,m,s}$ are obtained independently on each shell by spherical-harmonic projection.

For each observation, we draw $R_j$ independently from Haar measure and generate $y_j$ according to the cryo-EM model~\eqref{eq:cryoEM}. After applying the two-dimensional Fourier transform, we use the planar frequency-shell coefficients $\widehat{y}_{j,k}[s]$ defined in~\eqref{eq:noisy-projected-shell-coefficient}. Let $c(R_j)\in\mathbb C^{D_{\mathrm{proj}}}$ denote the vector obtained by stacking the corresponding noiseless coefficients over all retained planar frequencies and shells, where $D_{\mathrm{proj}}$ is the number of retained coefficients. The additive noise is taken to be circular complex Gaussian with variance $\sigma^2$ per coefficient. For the sampled rotations, we define
\begin{align}
    \mathrm{SNR} := \frac{ \frac{1}{n}\sum_{j=1}^{n}\|c(R_j)\|_2^2 }{ D_{\mathrm{proj}}\sigma^2 }.
    \label{eq:numerical-snr}
\end{align}
This is the empirical counterpart of the Haar-averaged signal-to-noise ratio.

Because the volume is identifiable only up to a global $\mathrm{O}(3)$ transformation, we first align the estimate using the degree-one block,
\begin{align}
    Q^\star := \argmin_{Q\in\mathrm{O}(3)} \|\widehat A_1-\rho_1(Q)A_1\|_F^2,
    \label{eq:numerical-global-alignment}
\end{align}
where $\rho_\ell(Q)$ denotes the action of $Q$ on the degree-$\ell$ real spherical-harmonic coefficients. We then report the normalized reconstruction error over all coefficients and, when assessing the forward frequency march separately, the corresponding error restricted to the isolating shells:
\begin{align}
    \operatorname{RMSE}(\widehat A,A)
    &:=
    \left( \frac{\sum_{\ell=0}^{L}\sum_{s\in[S_\ell]} \|\widehat A_\ell[s]-\rho_\ell(Q^\star)A_\ell[s]\|_2^2}{\sum_{\ell=0}^{L}\sum_{s\in[S_\ell]}\|A_\ell[s]\|_F^2} \right)^{1/2},
    \label{eq:numerical-nre}\\ \operatorname{RMSE}_{\mathrm{march}}(\widehat A,A)
    &:=    \left(\frac{\sum_{\ell=2}^{L}\sum_{s\in T_\ell^{\mathrm{iso}}} \|\widehat A_\ell[s]-\rho_\ell(Q^\star)A_\ell[s]\|_2^2} {\sum_{\ell=2}^{L}\sum_{s\in T_\ell^{\mathrm{iso}}}\|A_\ell[s]\|_F^2} \right)^{1/2}.
    \label{eq:numerical-march-nre}
\end{align}
Thus, $\operatorname{RMSE}$ measures the complete reconstruction error, whereas $\operatorname{RMSE}_{\mathrm{march}}$ measures the error on the coefficients recovered during the forward frequency march. The same global alignment $Q^\star$ is used in both metrics. The experiments were run on a MacBook Pro equipped with an Apple M3 Pro processor with 11 cores and 36 GB of memory. The code used to reproduce the experiments is available at~\url{https://github.com/krshay/projected-moments-recovery}.

\subsection{Validation of the constructive recovery}
\label{sec:constructive-recovery-experiments}

We first evaluate the constructive recovery procedure underlying Theorem~\ref{thm.shells}. We examine its behavior at the population level, its sensitivity to violations of the decay-by-three condition, and its stability when the projected moments are estimated from finitely many noisy observations.

\paragraph{Noiseless recovery and the shell condition.}
We begin with the TRPV1 and 80S volumes using harmonic bandlimit $L=9$ and $K=15$ radial shells on a $31^3$ grid. Since every harmonic degree is initially represented on all $15$ shells, the native shell counts are
\begin{align}
    (S_0,\ldots,S_9) = (15,15,15,15,15,15,15,15,15,15),
\end{align}
which do not satisfy Assumption~\ref{assum:decay}. In particular, $T_\ell^{\mathrm{iso}}$ is empty for $\ell\leq7$, so the forward frequency march cannot isolate the harmonic degrees as required by the constructive proof.

To test the theorem under its assumptions, we impose the decay-by-three support
\begin{align}
    (S_0,\ldots,S_9)  = (15,15,12,12,9,9,6,6,3,3).
    \label{eq:decay3}
\end{align}
We consider two versions of each molecular volume. In the first, the observable third moment $\mathcal{P}_3$ is computed from the original full-support coefficients while the recovery procedure assumes~\eqref{eq:decay3}. Thus, coefficients with $s>S_\ell$ contribute to the data but are omitted from the reconstruction model; these coefficients contain $7.5\%$ of the coefficient energy for TRPV1 and $4.4\%$ for the 80S ribosome. In the second, the coefficients outside~\eqref{eq:decay3} are set to zero before $\mathcal{P}_3$ is computed, so that the volume satisfies the assumptions of Theorem~\ref{thm.shells}. In both cases, $\mathcal{P}_3$ is evaluated analytically, and hence these experiments contain neither sampling error nor additive noise.

\begin{table}[t]
    \centering
    \begin{tabular}{llrrr}
        \toprule
        Volume & Coefficients & Forward march & All coefficients & Time (s) \\
        \midrule
        TRPV1 & full support ($7.5\%$ tail) & $1.56\times10^{2}$ & $2.26\times10^{7}$ & 8.8 \\
        TRPV1 & decay-by-three & $1.84\times10^{-12}$ & $6.66\times10^{-13}$ & 7.1 \\
        80S   & full support ($4.4\%$ tail) & $5.86\times10^{1}$ & $8.49\times10^{4}$ & 6.6 \\
        80S   & decay-by-three & $1.60\times10^{-13}$ & $3.26\times10^{-14}$ & 7.2 \\
        \bottomrule
    \end{tabular}
    \caption{\textbf{Noiseless constructive recovery.}
    Normalized coefficient errors from the analytic projected third moment.
    ``Forward march'' reports $\operatorname{RMSE}_{\mathrm{march}}$ on the isolating shells, while ``All coefficients'' reports the overall reconstruction $\operatorname{RMSE}$.}
    \label{tab:noiseless}
\end{table}

When the volume satisfies~\eqref{eq:decay3}, both structures are recovered to numerical precision. In contrast, even the relatively small model mismatch in the full-support volumes leads to large errors. The effect is particularly pronounced in the reverse backfill, which subtracts contributions formed from previously recovered higher-degree coefficients and therefore propagates errors accumulated during the forward march. For example, at $\ell=2$ the forward-marched TRPV1 shells have relative error $1.39\times10^{3}$, whereas the corresponding backfilled shells reach $2.07\times10^{8}$. This experiment illustrates the importance of the decay-by-three condition for the proposed constructive recovery procedure: it provides the structural mechanism used to isolate harmonic degrees.

\paragraph{Robustness to noisy moment estimates.}
We next study the finite-sample implementation under the assumptions of Theorem~\ref{thm.shells}. We generate a synthetic real volume with $L=5$ and
\begin{align}
    (S_0,\ldots,S_5) = (9,9,6,6,3,3),
    \label{eq:synthetic-shell-counts}
\end{align}
which satisfies Assumption~\ref{assum:decay}. 

We first fix $n=7.5\times10^5$ observations and vary the SNR between $0.1$ and $5$. For each SNR, we perform $10$ trials, independently drawing the viewing rotations and additive noise while keeping the underlying volume fixed. Within each trial, the same noiseless projections are used across SNR values and only the noise realization and variance are changed. Figure~\ref{fig:error-snr} shows the mean relative reconstruction error over the $10$ trials. The error decreases systematically with increasing SNR, demonstrating stability of the empirical frequency-marching procedure as the projected moments become more accurately estimated.

\paragraph{Dependence on the number of observations.}
Finally, we fix $\mathrm{SNR}=1$ and vary the number of observations from $2\times10^4$ to $5\times10^5$, again using $10$ independent trials for each value of $n$. Figure~\ref{fig:error-observations} reports the mean relative reconstruction error. The observed slope is approximately $-1/2$, consistent with the standard $n^{-1/2}$ estimation error of a fixed finite-dimensional population moment. 

\begin{figure}[t]
    \centering
    \subfigure[Robustness to noise.]{
        \includegraphics[width=0.47\linewidth]
        {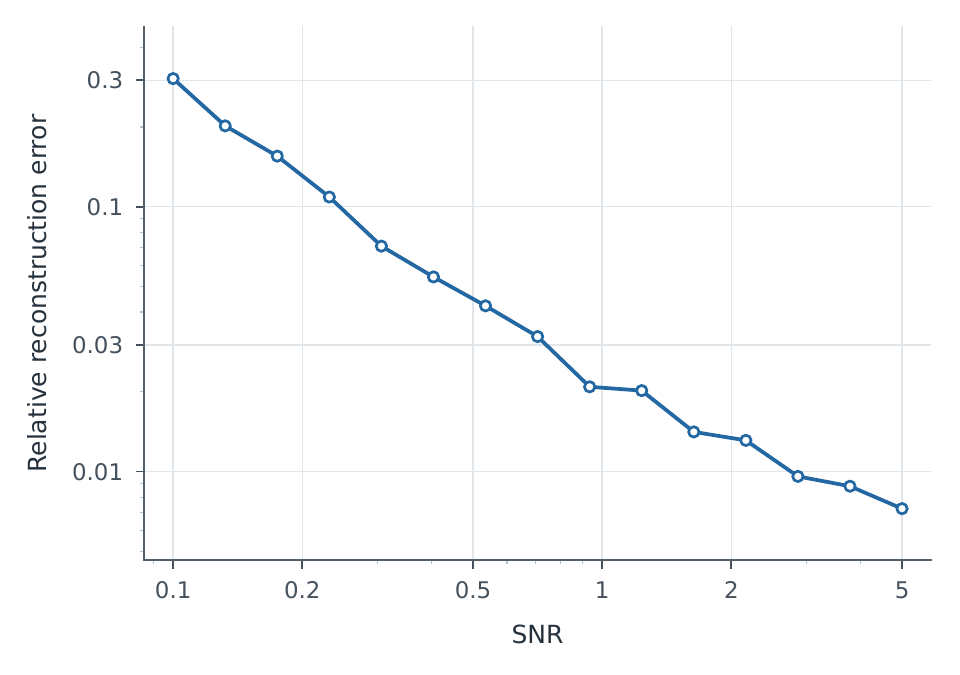}
        \label{fig:error-snr}
    }
    \hfill
    \subfigure[Sample complexity.]{
        \includegraphics[width=0.47\linewidth]
        {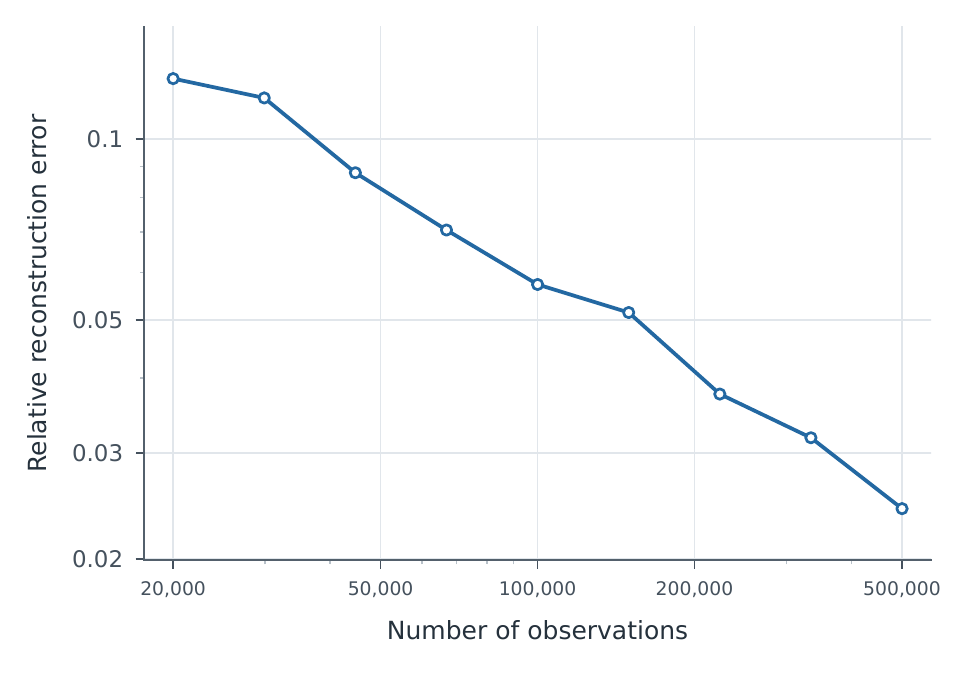}
        \label{fig:error-observations}
    }
    \caption{\textbf{Frequency-marching recovery from noisy observations.}
    Mean relative reconstruction error (as defined in~\eqref{eq:numerical-nre}), averaged over 10 trials:
    (a) versus SNR using $n=7.5\times10^5$ observations;
    (b) versus the number of observations $n$ at $\mathrm{SNR}=1$. 
    }
    \label{fig:frequency-marching-noise-samples}
\end{figure}

\subsection{Direct third-moment optimization}
\label{sec:direct-third-moment-optimization}

We conjecture that recovery from the projected third moment is still possible without the degree-dependent radial decay required by the constructive frequency-marching theorem. The following experiments provide numerical evidence for this conjecture. We set $L=5$ and retain the same number $K=12$ of radial shells at every harmonic degree; this shell count does not satisfy the assumption used by Algorithm~\ref{alg:population-recovery}. Given an estimate $\widehat{\mathcal{P}}_3$ of the projected third moment, formed from $n$ noisy projections as in Section~\ref{sec:numerical-setup}, we minimize
\begin{align}
    \mathcal L(A) =
    \frac{\|\mathcal P_3(A)-\widehat{\mathcal P}_3\|_{\mathrm F}^2} {\|\widehat{\mathcal P}_3\|_{\mathrm F}^2}
    \label{eq:direct-third-moment-loss}
\end{align}
over all coefficients, including those of degrees $\ell=0$ and $1$. We minimize~\eqref{eq:direct-third-moment-loss} by a quasi-Newton method, implemented using MATLAB’s \texttt{fminunc} solver. Every run terminated at a stationary point, with final gradient norm below $2.1\times10^{-8}$ (median $3\times10^{-9}$), within at most $3762$ iterations. Each run is initialized with i.i.d. Gaussian coefficients.

Since $\mathcal{P}_3$ is invariant under rotations and reflections of the volume, the minimizer is determined at best up to an element of
$\mathrm{O}(3)$. We therefore report the aligned error
\begin{align}
    \operatorname{err}(\widehat{A}) =
    \min_{g\in\mathrm{O}(3)}
    \frac{\|\widehat{A}-g\cdot A^\star\|_{F}}{\|A^\star\|_{ F}},
    \label{eq:aligned-error}
\end{align}
taken over all degrees $0\leq\ell\leq L$, since every coefficient is estimated.

We reconstruct the 80S ribosome (EMD-2660) and TRPV1 (EMD-24084, a reconstruction without imposed symmetry) from noisy projections, sweeping the SNR from high to low with $n=10^5$ observations and $20$ random initializations per SNR; see Figure~\ref{fig:80S_TRPV1_vs_noise}. At $\mathrm{SNR}=\infty$ the noise variance is zero but the moment is still estimated from $n$ projections, so the error is set entirely by sampling; because each SNR uses an independent draw of viewing directions, this sampling error differs between SNR values, which is why the TRPV1 error at $\mathrm{SNR}=\infty$ lies slightly above that at $\mathrm{SNR}=30$.

The restart experiments also suggest favorable optimization behavior for the tested volumes. For the 80S ribosome, at every $\mathrm{SNR}\geq 1$, the aligned errors of all $20$ random initializations agree to within $10^{-6}$. Moreover, at $\mathrm{SNR}=\infty$ and $\mathrm{SNR}=1$, the recovered coefficients coincide to within $3\times10^{-5}$ after alignment, indicating that all initializations converge to essentially the same solution. For TRPV1 the restarts reach several distinct stationary points of nearly equal loss, all at a similar distance from the truth: the standard deviation of the aligned error across restarts is at most $6.2\times10^{-3}$ at every SNR. In neither case did any run converge to a solution far from the truth. These experiments do not prove identifiability or the absence of spurious minima, but they are consistent with both. A fairly benign optimization landscape has also been observed for the non-uniform second moment~\cite{sharon2020method}. As a final demonstration for the 80S ribosome, Figure~\ref{fig:reconstruction} shows the recovered volume next to the true volume for $L=13$, $\mathrm{SNR}=2$ and $n=2\times10^5$ observations, rendered with ChimeraX~\cite{meng2023ucsf}. 

\begin{figure}[t]
    \centering
    \subfigure[Robustness to noise.]{
        \includegraphics[width=0.47\linewidth]        {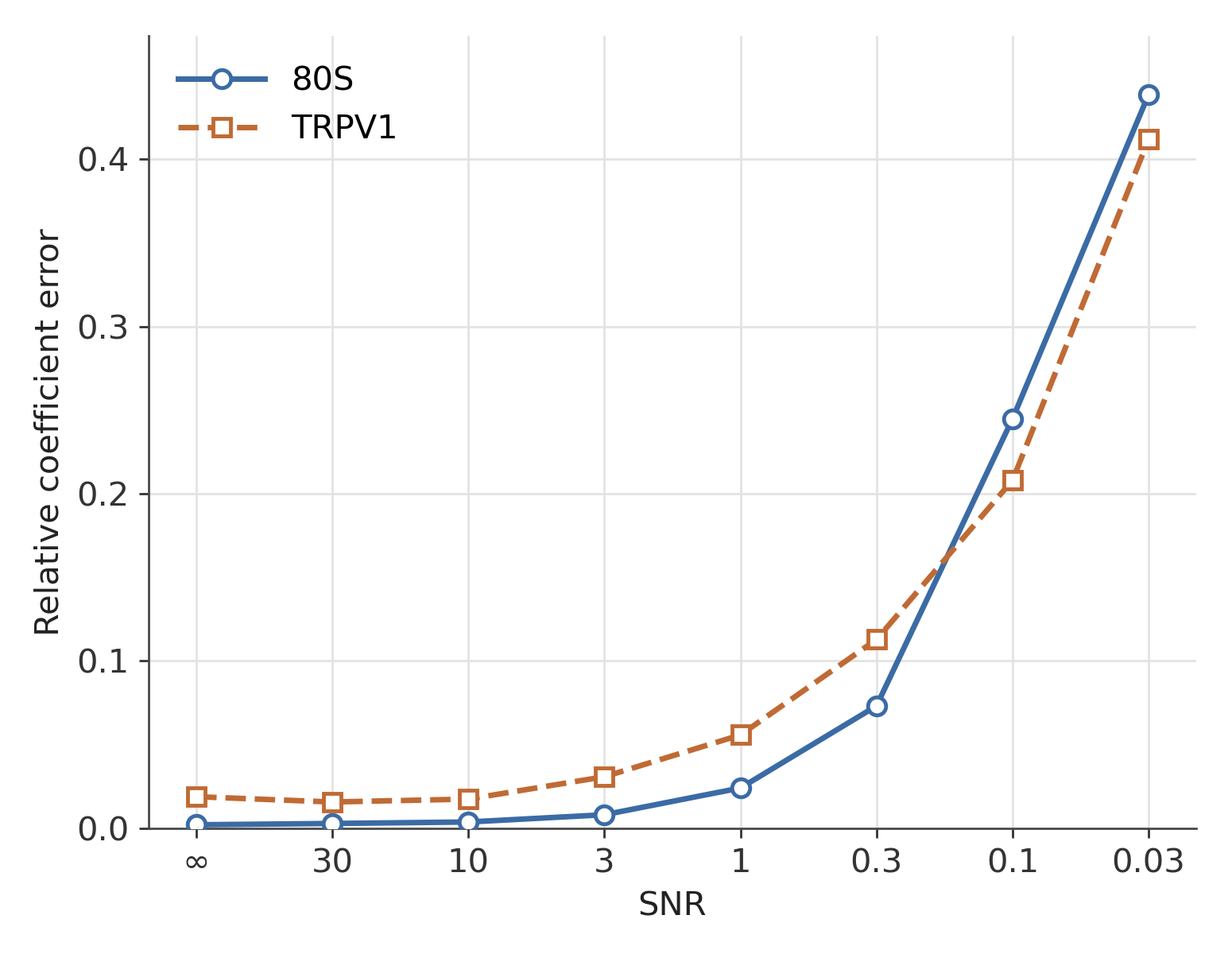}
        \label{fig:80S_TRPV1_vs_noise}
    }
    \hfill
    \subfigure[80S reconstruction.]{
        \raisebox{6mm}[\height][\depth]{%
            \includegraphics[width=0.47\linewidth]
            {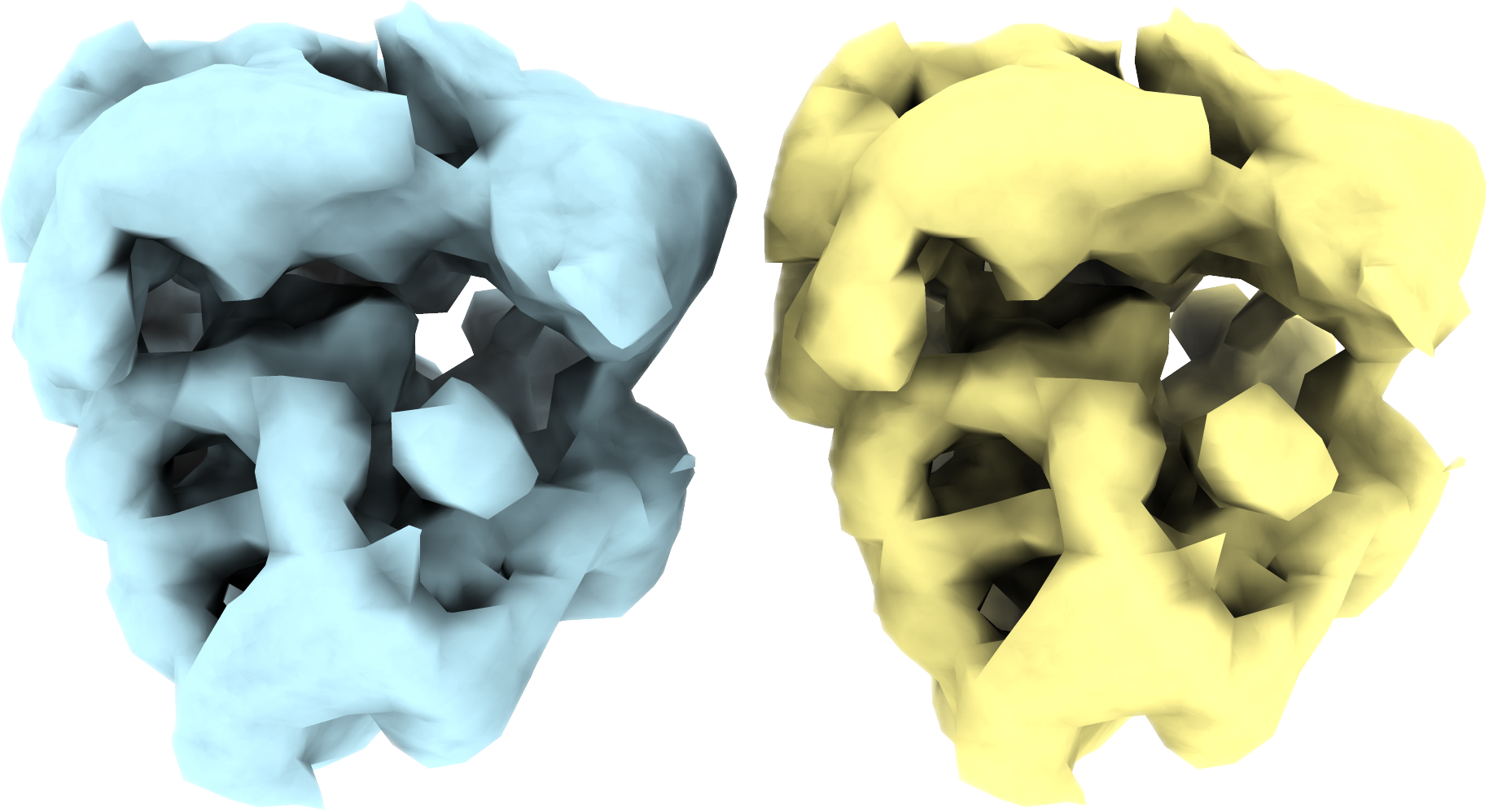}%
        }
        \label{fig:reconstruction}
    }
    \caption{\textbf{Direct third-moment optimization.}
    (a) Mean aligned relative coefficient error~\eqref{eq:aligned-error} over all degrees for 80S and TRPV1 versus SNR, using $n=10^5$ observations and averaged over $20$ random initializations per SNR. At $\mathrm{SNR}=\infty$ the noise variance is zero but the moment is still estimated from a finite number of $n$ projections.
    (b) Isosurface renderings of the true 80S volume (left) and the volume recovered at $\mathrm{SNR}=2$ using $2\times10^5$ observations (right), with $L=13$.}
    \label{fig:direct-optimization-results}
\end{figure}

\section{Discussion and outlook}
\label{sec:discussion}

We have shown that, in the spherical-shell model with uniformly distributed viewing rotations, the first three projected moments generically determine a real volume up to its global $\mathrm{O}(3)$ ambiguity under Assumption~\ref{assum:decay}. The numerical experiments confirm the constructive recovery mechanism within the assumed shell model and also show that direct third-moment optimization can succeed when the decay-by-three condition is violated, suggesting that this condition may be sufficient for the present proof rather than necessary for identifiability.

\paragraph{Future work.}
Several directions remain open. First, we aim to extend the identifiability analysis beyond the spherical-shell representation to more general radial bases, such as spherical-Bessel and spherical-prolate bases used in cryo-EM. Second, even within the spherical-shell model, it remains unclear whether the shell-separation assumption is necessary for generic identifiability or merely for the proposed constructive recovery procedure. Third, independently of identifiability, an important question is to characterize the numerical conditioning of the inverse problem and its dependence on the distribution of energy across harmonic degrees, radial support, and the choice of representation. Such an analysis would help distinguish fundamental limitations of moment-based recovery from instability arising from particular reconstruction algorithms.

An additional direction is to incorporate more realistic cryo-EM forward models. Contrast transfer functions can be included as known frequency-dependent operators in the projected-moment map. Other practically important extensions include nonuniform viewing distributions, unknown in-plane translations, heterogeneous mixtures of volumes, and symmetry-constrained molecules; see, for example,~\cite{bendory2020single,singer2020computational}. Each changes the structure of the observable moment tensors and may require different coupling or separation arguments.

\section*{Acknowledgments}
Portions of the code used for the numerical experiments were developed with the assistance of Claude (Anthropic) and ChatGPT (OpenAI), which were used to write and debug implementation code. All algorithms, mathematical derivations, and experimental designs are the authors' own. The authors reviewed, tested, and validated all AI-assisted code and take full responsibility for its correctness.

\bibliographystyle{IEEEbib}

\end{document}